\patchcmd{\section}{\scshape}{\bfseries\large}{}{}
\patchcmd{\subsubsection}{\itshape}{}{}{}
\def\@seccntformat#1{\csname the#1\endcsname.\space}
\numberwithin{equation}{section}
\newtheorem{theorem}{Theorem}[section]
\newtheorem{lemma}[theorem]{Lemma}
\newtheorem{proposition}[theorem]{Proposition}
\newtheorem{corollary}[theorem]{Corollary}
\theoremstyle{definition}
\newtheorem{definition}[theorem]{Definition}
\newtheorem{remark}[theorem]{Remark}
\newcommand{\E}{\mathbb{E}}
\newcommand{\N}{\mathbb{N}}
\renewcommand{\P}{\mathbb{P}}
\newcommand{\R}{\mathbb{R}}
\newcommand{\Z}{\mathbb{Z}}
\newcommand{\T}{\mathbb{T}}
\def\bs{\boldsymbol}
\newcommand\bP{\ensuremath{\bs{\mathrm{P}}}}
\newcommand\bE{\ensuremath{\bs{\mathrm{E}}}}
\newcommand{\cC}{{\ensuremath{\mathcal C}} }
\newcommand{\cT}{{\ensuremath{\mathcal T}} }
\newcommand{\weight}{\textup{\textrm{w}}}
\renewcommand{\theta}{\vartheta}
\renewcommand{\rho}{\varrho}
\newcommand{\effR}[3]{{\rm R}_{\rm eff}^{#1}(#2 \leftrightarrow #3)} 
\newcommand{\limn}{\lim\limits_{n \rightarrow \infty}}
\newcommand\xleftrightarrow[2][]{%
  \ext@arrow 9999{\longleftrightarrowfill@}{#1}{#2}}
\newcommand\longleftrightarrowfill@{%
  \arrowfill@\leftarrow\relbar\rightarrow}
\title[Local Limit of RSTRE]{Local limits of random spanning trees in random environment}
\date{\today}
\author[L. Makowiec]{Luca Makowiec}
\address{Department of Mathematics\\
National University of Singapore\\
10 Lower Kent Ridge Road, 119076 Singapore
}
\email{l.m.makowiec@gmail.com}
\keywords{disordered system,  minimum spanning tree, random graphs, uniform spanning tree}
\subjclass[2020]{Primary: 60K35;  Secondary: 82B41, 82B44, 05C05}
\begin{document}

\begin{abstract}
 We study the edge overlap and local limit of the random spanning tree in random environment (RSTRE) on the complete graph with $n$ vertices and weights given by $\exp(-\beta \omega_e)$ for $\omega_e$ uniformly distributed on $[0,1]$. We show that for $\beta$ growing with $\beta = o(n/\log n)$, the edge overlap is $(1+o(1)) \beta$, while for $\beta$ much larger than $n \log^2 n$, the edge overlap is $(1-o(1))n$. Furthermore, there is a transition of the local limit around $\beta = n$. When $\beta = o(n/ \log n)$ the RSTRE locally converges to the same limit as the uniform spanning tree, whereas for $\beta$ larger than $n \log^\lambda n$, where $\lambda = \lambda(n) \rightarrow \infty$ arbitrarily slowly, the local limit of the RSTRE is the same as that of the minimum spanning tree.
\end{abstract}

\maketitle

\section{Introduction}

Let $K_n = (V_n, E_n) = (V,E)$ be the complete graph on the vertex set $V = \{1, \ldots, n\}$, and let $\omega = (\omega_e)_{e \in E}$ be i.i.d.\ random variables uniformly distributed on $[0, 1]$, defined on some probability space with underlying probability measure $\P$. The \textit{random spanning tree in random environment} (RSTRE) on $K_n$ is defined as follows.  Given a realization of the random environment $(\omega_e)_{e \in E}$ and $\beta \geq 0$, we let the weight (or conductance) of an edge be $\weight(e) = \exp(-\beta \omega_e)$. We then define the (weighted) uniform spanning tree measure on $\T(K_n)$, the set of spanning trees on $K_n$, where we identify each tree with its set of edges, as
\begin{equation}\label{eq:PomegaT}
	\bP_{n, \beta}^{\omega}(\mathcal{T} = T) :=\frac{1}{Z_{\beta}^{\omega}} \prod_{e \in T} \weight(e) = \frac{1}{Z_{\beta}^{\omega}} \prod_{e \in T} \exp(-\beta \omega_e),
\end{equation}
where $Z_{\beta}^{\omega}$ is the normalization constant, also known as the partition function. We write $\bE^\omega_{n,\beta}$ for the expectation w.r.t.\ $\bP_{n, \beta}^{\omega}$, and we will often drop the sub- and superscripts when there is no ambiguity. Instead of studying the quenched law \eqref{eq:PomegaT} for fixed realizations of $\omega$, we will study the averaged (over $\omega$) law 
\begin{equation*}
	\widehat{\P}_{n,\beta}(\cdot) = \widehat{\P}(\cdot) = \E \big[ \bP( \cdot) \big]
\end{equation*}
with corresponding expectation $\widehat{\E}$. 

In \cite{MSS23}, the authors showed that for arbitrary distributions, but fixed $\beta$, the diameter of typical trees on sequences of either bounded degree expanders or boxes in $\Z^d$, $d\geq 5$, grows asymptotically as $|V|^{1/2 + o(1)}$. In \cite{MSS24}, the same authors further analyzed the diameter of a typical tree on the complete graph with the environment being distributed as i.i.d.\ uniforms on $[0,1]$. They show the existence of two regimes: (1) a low disorder regime $\beta \leq n^{1 - o(1)}$, where the diameter is of the same order as the diameter of the \textit{uniform spanning tree} (UST); (2) a high disorder regime $\beta \geq n^{4/3 + o(1)}$, where the diameter is comparable to the diameter of the random \textit{minimum spanning tree} (MST). In this paper, we shall study local observables for the RSTRE with the same setting as in \cite{MSS24}.

\subsection{Main results}

Fix some realization of the environment $\omega = (\omega_e)_{e \in E}$, and let $\cT$ and $\cT'$ be two spanning trees sampled independently according to $\bP^\omega_{n, \beta}$. The first observable that we shall study is the expected edge overlap
\begin{equation} \label{eq:def_overlap}
	\mathcal{O}(\beta) :=  \bE^{\omega}_{n, \beta} \, \otimes \, \bE^{\omega}_{n, \beta} \big[ |\cT \cap \cT'| \big] = \sum_{T, T' \in \T(K_n)} | T \cap T'|\, \bP^\omega_{n, \beta}(\cT = T) \bP^\omega_{n, \beta}(\cT = T'),
\end{equation}
where we identify the trees with their edge sets and let $\beta = \beta(n)$ depend on $n$. Our first main result is the following, where we say that a sequence of events $A_n$ holds with high probability if $\widehat{\P}(A_n) \rightarrow 1$ as $n$ tends to infinity (see also Section~\ref{SS:notation} for a remark about the $o_\beta(1)$ notation).

\begin{theorem} \label{T:Overlap}
	Let $\cT$ be the RSTRE on the complete graph with $n$ vertices and random environment distributed as uniform random variables on $[0,1]$. 
    \begin{enumerate}[label=\roman*)]
        \item \label{T:Overlap_low} If $0 < \beta = \beta(n) \ll n/\log n$, then with high probability
	\begin{equation} \label{eq:overlap_low}
		\mathcal{O}(\beta) = \big(1 + o_\beta(1) \big) \, \beta \frac{1 - e^{-2\beta}}{(1 -e^{-\beta})^2}.
	\end{equation}  
        \item \label{T:Overlap_high} If $\beta \gg n (\log n)^2$, then with high probability
	\begin{equation} 
		\mathcal{O}(\beta) = \big(1 - o_\beta(1) \big) \, n.
	\end{equation} 
    \end{enumerate}
\end{theorem}

\begin{remark}
	As the probability of an edge being in the (unweighted) UST on the complete graph (which corresponds to $\beta = 0$ in \eqref{eq:PomegaT}) is $2/n$ and is independent of $\omega$, the expected overlap of the UST is $2 (n-1)/n$. On the other hand, for the MST the overlap is trivially $(n-1)$. 
\end{remark}

Our second result is the local (weak) convergence of the RSTRE to either the UST local limit or the MST local limit depending on the strength of the disorder $\beta = \beta(n)$. We refer to Section~\ref{SS:local}, and in particular to \eqref{eq:def_local_conv}, for a characterization of local (weak) convergence of the RSTRE.

\begin{theorem} \label{T:local_limit}
	Let $\cT$ be the RSTRE on the complete graph with $n$ vertices and random environment distributed as uniform random variables on $[0,1]$. 
    \begin{enumerate}[label=\roman*)]
        \item  \label{T:local_limit_low} If $\beta = \beta(n) \ll n/\log n$, then under the averaged law $\widehat{\P}$ we have the local convergence
	\begin{equation} \label{eq:localRSTRE_UST}
		(\cT, 1) \xrightarrow{d} (\mathcal{P},o),
	\end{equation}
	where $(\cT,1)$ is $\cT$ rooted at $1$ and $\mathcal{P}$ is the tree associated to a Poisson(1) branching process conditioned to survive forever. 
    
    \item \label{T:local_limit_high} If on the other hand $\beta = \beta(n) \geq n (\log n)^\lambda$, where $\lambda = \lambda(n) \rightarrow \infty$ arbitrarily slowly, then under the averaged law $\widehat{\P}$ we have the local convergence
	\begin{equation} \label{eq:localRSTRE_MST}
		(\cT, 1) \xrightarrow{d} (\mathcal{M}, o),
	\end{equation}
    where the (random) rooted tree $(\mathcal{M}, o)$ is the local limit of the MST on the complete graph.
    \end{enumerate} 
\end{theorem}

\noindent To construct $\mathcal{P}$, 
we first consider an infinite backbone $x_0, x_1, x_2, \ldots$ started at the root $x_0 = o$, and then attach trees associated to independent Poisson(1) branching processes to each vertex $x_i$. For a large class of graphs, this is the local limit of the UST (see e.g.\ \cite{BP93} and \cite{NP22}). In \cite{Add13}, the author gives an explicit description of the tree $\mathcal{M}$, by running
a process called invasion percolation from every vertex of the so-called Poisson weighted infinite tree, and taking the union of all the components containing the origin. 
We refer to Section \ref{SS:local} for more details about local convergence of the UST and MST.

Theorem~\ref{T:local_limit} shows that there is a sharp transition for the local limit when $\beta = n^\gamma$ and $\gamma$ crosses the critical value $\gamma_c = 1$. This stands in contrast to \cite[Conjecture 1.3]{MSS24}, where the diameter is conjectured to smoothly interpolate between $n^{1/2}$ and $n^{1/3}$ when $\gamma \in [1, 4/3]$. It would be interesting to study what the local limit (if it exists) should be when $\beta$ is of order $n$.

\smallskip

Very shortly after we posted this paper, \cite{K24} uploaded a paper studying the same model as ours. In particular, using the Aldous-Broder algorithm, they manage to (independently) prove Theorem~1.1 of \cite{MSS24}. Furthermore, they study yet another observable called the total length, which in our notation is defined as
\begin{equation*}
	L(\cT) := \sum_{e \in \cT} \omega_e,
\end{equation*}
and is equivalent to the Hamiltonian $H(\cT, \omega)$ as in \cite{MSS24}. In the following, we state an improvement to the low disorder part of \cite[Theorem~1.8]{K24}.
\begin{theorem} \label{T:length}
	Let $\cT$ be the RSTRE on the complete graph with $n$ vertices and random environment distributed as uniform random variables on $[0,1]$. 
    \begin{enumerate}[label=\roman*)]
        \item \label{T:length_low} If $0 < \beta = \beta(n) \ll n/\log n$, then with high probability (and in $\P$-expectation)
	\begin{equation} \label{eq:length_low}
		\bE^\omega_{n,\beta} \big[ L(\cT) \big]= \big(1 + o_\beta(1) \big) \frac{n}{\beta} \cdot \frac{1 - \beta e^{-\beta} - e^{-\beta}}{1- e^{-\beta}}.
	\end{equation} 

    \item \label{T:length_high} If $\beta \gg n (\log n)^5$, then with high probability (and in $\P$-expectation)
	\begin{equation} \label{eq:length_high}
		\bE^\omega_{n,\beta} \big[ L(\cT) \big] = \big(1 + o_\beta(1) \big) \zeta(3),
	\end{equation}
	where $\zeta(3) = \sum_{k=1}^\infty k^{-3} = 1.202\dots$ is Apéry's constant. \label{enu:length_high}
    \end{enumerate} 
\end{theorem}
The constant $\zeta(3)$ is precisely the limiting total length of the MST on the complete graph, as determined in \cite{Fri85}. Theorem~\ref{T:length} will be a consequence of the tools we develop for the edge overlap, see the end of Sections~\ref{SS:effR_up} and \ref{SS:overlap_high} for a (sketch) of the proofs. We remark that in \cite[Theorem~1.8]{K24}, it is shown that the equality \eqref{eq:length_high} in the high disorder holds under the milder condition $\beta \gg n \log n$. See also the PhD thesis \cite[Section 4.4]{Mak25} of the author for yet another alternative simple proof of this fact. Additionally, in a newer version of \cite{K24}, it is proven that the RSTRE local limit agrees with that of the MST local limit under the milder condition $\beta \gg n \log n$, giving an improvement to the case of large $\beta$ in Theorem~\ref{T:local_limit}.


\subsection{Outline and proof ideas}%
We split the proofs of Theorems~\ref{T:Overlap}, \ref{T:local_limit} and \ref{T:length} into two sections: in Section \ref{S:low} we cover the theorems in the low disorder regime ($\beta \ll n/\log n$), whereas Section \ref{S:high} covers the high disorder regime ($\beta \geq n^{1+o(1)}$). Section \ref{S:prelim} covers some preliminaries about spanning trees and effective resistances. Both the proofs of Theorem~\ref{T:Overlap} and Theorem~\ref{T:local_limit} will make use of the fact that we are able to give sharp bounds on the effective resistance in the weighted graph between pairs of vertices (see Definition~\ref{D:effR}) so that Kirchhoff's formula (Theorem \ref{T:Kirchhoff}) allows us to compute the probabilities of edges being in the tree. 

Note that edges with large weight, say larger than $e^{-1}$, correspond to edges $e$ with $\omega_e < 1/\beta$. These edges form a subgraph distributed as an Erdős-R\'enyi random graph $G(n,p)$ with $p=1/\beta$. The proof techniques differ depending on the choice of $p$ (or equivalently $\beta$). When $p \gg \log n/n$, the graph $G(n,p)$ is well connected and in some sense behaves like a $pn$-regular graph (with $pn$ diverging), so that the tree $\cT$ has many edges to choose from. 
When $p = 1/\beta \ll 1/n$, the corresponding Erdős-R\'enyi random graph is disconnected with (almost cycle--free) components of at most logarithmic size. In this case, paths consisting of large weighted edges (these paths will be in the MST) have only very little competition from other edges, and hence they influence the UST measure much more than in the low disorder case. 


More precisely, for $\beta \ll n/\log n$, we will show that the effective resistance between any two endpoints of an edge in the weighted graph $(K_n, \weight)$ concentrates around the value that we get in a graph where all the weights are replaced by their expected value. The main idea is that we can approximate the weighted graph $(K_n, \weight)$ by a union of disjoint Erdős-R\'enyi random graphs of parameter $p$, with discretized and constant weights, to give bounds on the effective resistance on $(K_n,\weight)$.  We choose $p = 1/\beta$, such that whenever $\beta \ll n/\log n$ the random graph is connected, and as there will be many edges, the effective resistance (on the unweighted graph) will be of the correct order. Once $\beta \gg n/\log n$, this approximation breaks down. 

In fact, when $\beta \geq n^{1+o(1)}$, we can identify most of the edges that contribute significantly to the effective resistance. These will (mostly) be the edges belonging to the random MST $M$, and with high probability many (but not all) of these edges will also be in the weighted spanning tree. This means that $| E(\cT) \setminus E(M) | = o(|E(\cT)|) = o(n)$, and thus for independent trees $\cT$ and $\cT'$, we will have $|E(\cT) \cap E(\cT')| = n - o(n)$. Furthermore, as the size of the local neighborhoods of the trees does not grow too fast, the tree will locally not observe these $o(n)$ edges that disagree with the MST.

\subsection{Notation} \label{SS:notation}
We write $(G, \weight)$ for a finite graph with positive weights $\weight(\cdot)$ on the undirected edges $e = \{u, v\}$, and we write $(G,1)$ for the case when all the weights are equal to one. We denote by $E^\rightarrow = \{ (u,v) : \{u,v\} \in E \}$ the set of directed edges, where we include each edge in both orientations. For each edge $e = (u,v) \in E^\rightarrow$ we write $e^- = u$ and $e^+ = v$ for the starting and endpoints of $e$. Additionally, for each undirected edge in $E$ we (arbitrarily) fix one orientation and write $E^\rightarrow_{\textrm{fix}} \subset E^\rightarrow$ for the set of these directed edges. The edge sets $E$ and $E^\rightarrow_{\textrm{fix}}$ are then in a one-to-one correspondence with each other.
We often identify graphs and subgraphs with their own edge sets. Specifically, if $G_1$ and $G_2$ are two graphs on the same vertex set $V$, we denote by $G_1 \cap G_2$ the graph consisting of the vertex set $V$ with edges $E(G_1) \cap E(G_2)$. 

For functions $f,g:\N \rightarrow [0,\infty)$, we write either $f\ll g$ or $f=o(g)$ if $f(n)/g(n)\rightarrow 0$ as $n\rightarrow \infty$, and we write $f \gg g$ if $g(n)/f(n)\rightarrow 0$ as $n\rightarrow \infty$. Furthermore, we write $f=O(g)$ if there is a constant $C\geq 0$ such that $f(n)\leq Cg(n)$ for all large enough $n$. When either $o(g)$ or $O(g)$ depends on some parameter, which itself may also depend on $n$ (e.g. $\beta = \beta(n)$), we add a subscript and write $o_\beta(g)$ or $O_\beta(g)$. For instance, if $\beta(n) \ll n/\log n$ and we say that for a random variable $X_n(\beta)$ it holds that
\begin{equation*}
    X_n(\beta) = \big(1 + o_\beta(g_1) \big) f(\beta)
\end{equation*}
 with $\widehat{\P}$- (or $\P$-) probability at least $1- o_\beta(g_2)$, then we mean that there exists functions $h_1, h_2 : \R \times \N \rightarrow [0,\infty)$, with $h_1(\beta(n),n) = o(g_1)$ and $h_2(\beta(n), n) = o(g_2)$ whenever $\beta(n) \ll n/\log n$, such that 
\begin{equation*}
    \widehat{\P} \Big( \big| X_n(\beta) - f(\beta) \big| \geq h_1(\beta(n),n) \cdot f(\beta) \Big) \leq h_2(\beta(n), n).
\end{equation*}
Similar notions hold when we replace $o_\beta(\cdot)$ with  $O_\beta(\cdot)$. Finally, if an event occurs with probability at least $1- o_\beta(1)$ (i.e.\ $g_2 \equiv 1$), then we say that it holds with high probability.

\section{Preliminaries} \label{S:prelim}

In this section, we first recall the relevant notation for electric networks, following mainly the exposition of Chapter 2 in \cite{LP16}. Then we discuss a coupling of the random environment and the MST to an Erdős-R\'enyi random graph, and lastly, we recall the notion of local convergence of graphs needed for Theorem~\ref{T:local_limit}.

\subsection{Electric Networks and UST} \label{SS:electric}

Let $(G, \weight)$ be a graph with weights (or conductances) $(\weight(e))_{e \in E}$ on the edges. We may think of $1/\weight(e)$ as the resistance of an edge. Given disjoint vertex sets $A, B \subset V$, a unit flow $\theta : E^\rightarrow \rightarrow \R$ from $A$ to $B$ is an antisymmetric function that has net flow out of $A$ equal to $1$, net flow out of $B$ equal to $-1$, and zero net flow out of any vertices not in $A \cup B$. That is, $\theta$ satisfies
\begin{enumerate}
	\item $\theta(e^-,e^+) = - \, \theta(e^+,e^-)$;
	\item $\sum_{e^- = x} \theta(e) = 0$ for all $x \not\in A \cup B$;
	\item and 
	\begin{equation*}
		\sum_{a \in A} \sum_{e^- = a } \theta(e) = 1 = - \sum_{b \in B}\sum_{e^- = b} \theta(e). 
	\end{equation*}
\end{enumerate}
The energy of a flow $ \Vert\theta \Vert^2_r$ is defined as 
\begin{equation}
	\Vert\theta \Vert^2_r : = \frac{1}{2} \sum_{e \in E^\rightarrow} \frac{1}{\weight(e)} \theta(e)^2,
\end{equation}
where each edge appears with both orientations in the sum. 

We say that a function $v : V(G) \rightarrow \R$ is harmonic in $X \subseteq V(G)$ if
\begin{equation*}
    v(x) = \sum_{y \sim x} \frac{\weight(x,y)}{\weight(x)} v(y) \qquad \forall x \in X,
\end{equation*}
where $\weight(x) = \sum_{y \sim x} \weight(x,y)$.
Suppose that $v$ is harmonic in $V(G)\setminus(A\cup B)$, $v(a)=c$ for some constant $c>0$ and all $a\in A$, and $v(b)=0$ for all $b\in B$. Then the function $\theta$ defined by
\begin{equation*}
    \theta(e^-,e^+) := \weight(e^-, e^+) ( v(e^-) - v(e^+)).
\end{equation*}
is a flow from $A$ to $B$: it is antisymmetric and has zero net flow at every vertex in $V(G)\setminus(A\cup B)$ (the total flow out of $A$ need not equal $1$). In this setting, $v$ is called the \textit{voltage}, and the corresponding flow $\theta$ the associated \textit{current}.

\begin{definition}[Effective resistance] \label{D:effR}
	Let $A, B \subset V$ be two disjoint sets of vertices. We define the effective resistance between $A$ and $B$ as
	\begin{equation*}
		\effR{(G,\weight)}{A}{B} := \min \big\{  \Vert\theta \Vert^2_r \, : \, \theta \text{ is a unit flow from } A \text{ to } B \big\}.
	\end{equation*}
\end{definition}

\noindent It can be shown that the effective resistance is symmetric with respect to $A$ and $B$, and that there is a unique minimizing unit flow whose energy is equal to the effective resistance. We shall henceforth refer to this flow as the \textit{unit current flow}; it is induced by a corresponding voltage. 
We note that (see \cite[Section~2.2 and 2.4]{LP16})
the effective resistance may equivalently be defined as
\begin{equation} \label{eq:def_eff_R_RW}
    \effR{(G,\weight)}{A}{B} = \frac{1}{\sum_{a \in A} \weight(a) Q_a(\tau_B < \tau_A^+)}, 
\end{equation}
where $Q_a$ is the probability measure under which a random walk started at $a$ is defined, and $\tau_B$ (resp.\ $\tau^+_A$) are the first hitting (resp.\ first return) time of the random walk to $B$ (resp.\ $A$). Here the transition probabilities $q(\cdot, \cdot)$ of the random walk on $(G, \weight)$ are defined by $q(u,v) = \weight(u,v)/\weight(u)$.

One approach we will take to estimate the effective resistance is to compare it with the effective resistance on a simpler graph where the computations are significantly easier. A key tool we will repeatedly use is Rayleigh’s monotonicity principle (see \cite[Section~2.4]{LP16}).

\begin{theorem}[Rayleigh’s monotonicity principle]
    Let $\weight, \weight'$ be weights on the edges of a graph $G$ with $\weight(e) \leq \weight'(e)$ for all edges $e \in E(G)$. If $A,B \subset V$ are two disjoint sets, then
\begin{equation*}
    \effR{(G, \weight')}{A}{B} \leq \effR{(G,\weight)}{A}{B},
\end{equation*}
i.e.\ the effective resistance is a non-increasing function of the weights.
\end{theorem}

One application of Rayleigh’s monotonicity principle is to lower bound the effective resistance by that on the contracted graph $G / f$, $f = \{u, v\}$, where we identify the vertices $u$ and $v$ into a single vertex, remove $f$, and preserve all other edges. This can be thought of as giving the edge $f$ infinite weight. 
Two further tools that are useful for computing effective resistances are the series and parallel laws (see \cite[Section~2.3]{LP16}), which allow one to simplify paths and parallel edges into single edges with new weights without changing the effective resistance one aims to compute. 

One important connection between electric networks and USTs is Kirchhoff's formula,
where in the following we define for a weighted graph $(G, \weight)$ the measure
\begin{equation*}
    \bP^{(G,\weight)}(\cT = T) = \frac{1}{Z^{(G,\weight)}} \prod_{e \in T} \weight(e)
\end{equation*}
as the (weighted) uniform spanning tree measure on the set of spanning trees on $G$ similarly to \eqref{eq:PomegaT}. For notational simplicity, we will often omit the dependence on $(G, \weight)$ and write $\bP$ instead.

\begin{theorem}[Kirchhoff's formula {\cite[Section~4.2]{LP16}}] \label{T:Kirchhoff}
	Given a weighted graph $(G, \weight)$, the probability that an edge $\{u,v\}$ is in the (weighted) uniform spanning tree satisfies
	\begin{equation} \label{eq:p_in_tree_r_eff}
        \bP^{(G,\weight)}  \big( \{u,v\} \in \mathcal{T} \big) = \weight(u,v) \effR{(G,\weight)}{u}{v}.
	\end{equation}
\end{theorem}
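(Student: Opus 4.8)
The plan is to derive Kirchhoff's formula from two classical ingredients — \emph{deletion-contraction} for the weighted spanning-tree polynomial and the \emph{matrix-tree theorem} in its all-minors form — together with the variational description of effective resistance given in Definition~\ref{D:effR}. Throughout, assume $(G,\weight)$ is connected, as is implicit for the uniform spanning tree to be defined; write $r(e)=1/\weight(e)$ for the edge resistances, let $L$ be the weighted Laplacian of $(G,\weight)$ (so $L_{xy}=-\weight(xy)$ for $x\ne y$ and $L_{xx}=\sum_{y}\weight(xy)$), and set $Z(H)=\sum_{T\in\T(H)}\prod_{e\in T}\weight(e)$ for the weighted spanning-tree partition function of a finite (multi)graph $H$; thus $Z_\beta^\omega=Z(G)$ and $\bP\big((u,v)\in\mathcal{T}\big)=Z(G)^{-1}\sum_{T\in\T(G),\,T\ni(u,v)}\prod_{e\in T}\weight(e)$.

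\textbf{Step 1 (deletion-contraction).} Splitting $\T(G)$ according to whether the fixed edge $e=(u,v)$ is used, the spanning trees containing $e$ contribute exactly $\weight(e)\,Z(G/e)$, where $G/e$ is $G$ with $e$ contracted, so $\bP\big((u,v)\in\mathcal{T}\big)=\weight(e)\,Z(G/e)/Z(G)$. Lifting a spanning tree of $G/e$ back to $G$ sets up a weight-preserving bijection with the spanning forests of $G$ having exactly two components, one containing $u$ and the other $v$ (if $u$ and $v$ lay in a common component, the lift would contain a cycle of $G/e$). Hence $Z(G/e)$ equals the weighted count of such ``$u,v$-separating $2$-forests'', and the theorem reduces to the identity
\begin{equation}\label{eq:Reff_as_ratio}
    \effR{}{u}{v}=\frac{1}{Z(G)}\sum_{F:\ 2\text{-forest separating }u,v}\ \prod_{f\in F}\weight(f).
\end{equation}

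\textbf{Step 2 (effective resistance as a ratio of Laplacian minors).} Let $\theta^\star$ be the minimiser in Definition~\ref{D:effR}, i.e.\ the unit current flow from $u$ to $v$. Writing the set of unit flows as an affine translate of the cycle space, the optimality condition that $\theta^\star$ be orthogonal to the cycle space in the energy inner product $\langle\phi,\psi\rangle_r=\tfrac12\sum_e r(e)\phi(e)\psi(e)$ forces $\theta^\star(e)=\weight(e)\big(h(e^-)-h(e^+)\big)$ for some potential $h:V\to\R$ (a $1$-form with vanishing circulation around every cycle is exact). Summation by parts against $\theta^\star$, whose net outflow is $+1$ at $u$, $-1$ at $v$ and $0$ elsewhere, gives $\effR{}{u}{v}=\|\theta^\star\|^2_r=h(u)-h(v)$, while the node law reads $(Lh)_x=0$ for $x\notin\{u,v\}$ and $(Lh)_u=1=-(Lh)_v$. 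Normalising $h(v)=0$, the restriction of $h$ to $V\setminus\{v\}$ is the unique solution of the linear system with coefficient matrix $L^{(v)}$ (the Laplacian with the $v$-row and $v$-column removed) and right-hand side the $u$-th standard basis vector, so Cramer's rule yields
\begin{equation*}
    \effR{}{u}{v}=h(u)=\big(L^{(v)}\big)^{-1}_{uu}=\frac{\det L^{(u,v)}}{\det L^{(v)}},
\end{equation*}
where $L^{(u,v)}$ deletes the rows and columns indexed by both $u$ and $v$. The weighted matrix-tree theorem gives $\det L^{(v)}=Z(G)$, and its all-minors version identifies $\det L^{(u,v)}$ with the weighted number of spanning forests of $G$ with two components, one containing $u$ and one containing $v$. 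Plugging these into the last display proves \eqref{eq:Reff_as_ratio}, and combining with Step~1 gives $\bP\big((u,v)\in\mathcal{T}\big)=\weight(e)\,\effR{}{u}{v}$.

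I expect the step needing the most care to be the all-minors matrix-tree identity for $\det L^{(u,v)}$: a Cauchy-Binet expansion of this principal minor over the oriented incidence matrix, followed by the sign bookkeeping that kills every term except the $2$-forests separating $u$ from $v$ (the plain matrix-tree identity $\det L^{(v)}=Z(G)$ being the one-index special case of the same computation). The other ingredients — deletion-contraction, the bijection with $2$-forests, the optimality characterisation of $\theta^\star$ (Thomson's principle), and the summation-by-parts identity $\|\theta^\star\|^2_r=h(u)-h(v)$ — are routine once orientations and deleted coordinates are tracked consistently. A more probabilistic alternative would generate $\mathcal{T}$ by Wilson's algorithm and read off $\bP\big((u,v)\in\mathcal{T}\big)$ from the Green's function of the network random walk, but the matrix-tree route above is the most self-contained given the tools already set up in the excerpt.
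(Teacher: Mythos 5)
Your proof is correct. Note, however, that the paper does not prove this statement at all: it is quoted as a classical result with a citation to \cite[Section 4.2]{LP16}, so there is no ``paper proof'' to compare against line by line. Your route — deletion--contraction to reduce to the identity $\effR{}{u}{v}=F_{uv}/Z(G)$ with $F_{uv}$ the weighted count of $2$-forests separating $u$ from $v$, then Thomson's principle plus Cramer's rule and the (all-minors) matrix-tree theorem to establish that identity — is a standard and fully valid derivation, and each step checks out: the exactness of $e\mapsto r(e)\theta^\star(e)$ from orthogonality to the cycle space, the summation-by-parts identity $\Vert\theta^\star\Vert_r^2=h(u)-h(v)$, the reduction of the node law to $L^{(v)}h'=e_u$, and the cofactor formula $(L^{(v)})^{-1}_{uu}=\det L^{(u,v)}/\det L^{(v)}$ are all handled consistently. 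It differs in flavour from the argument in \cite{LP16}, which instead defines a ``spanning-tree current'' combinatorially (via the probability that the tree path from $u$ to $v$ traverses a given edge in a given orientation), verifies that it satisfies Kirchhoff's node and cycle laws, and concludes it equals the unit current flow; your linear-algebraic route trades that probabilistic verification for the all-minors matrix-tree identity, which you correctly flag as the one ingredient whose Cauchy--Binet/sign bookkeeping you have deferred rather than written out. Either approach is acceptable here; for the purposes of this paper the statement is an imported black box, so no further detail is required.
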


\noindent Applying Kirchhoff's formula to the overlap $\mathcal{O}(\beta)$ from \eqref{eq:def_overlap} shows that
\begin{align} \label{eq:OV_effR}
	\mathcal{O}(\beta)  &= \bE^{\otimes 2} \Big[ \sum_{\{u,v\} \in E} 1_{\{u,v\} \in \cT, \{u,v\} \in \cT'}  \Big] \nonumber \\
	&= \sum_{\{u,v\} \in E} \bP\big( \{u,v\} \in \cT \big)^2= \sum_{\{u,v\} \in E} \weight(u,v)^2 \effR{(G,\weight)}{u}{v}^2.
\end{align}
In fact, the theory of electric networks allows one to calculate the probability of any collection of edges being in the tree. We formalize this in the following definition and theorem, where we recall that each edge $e$ in $E$ corresponds to an edge with fixed orientation in $E^\rightarrow_{\textrm{fix}}$ with starting and end points $e^-$ and $e^+$.

\begin{definition}[Transfer-impedance matrix]
	Let $v_e(\cdot)$ be the voltages on the vertices when a unit current flow $\theta_e(\cdot)$ is sent from $e^-$ to $e^+$, i.e.\ $v_e(\cdot)$ is the unique (weighted) harmonic function on $V \setminus \{e^-, e^+\}$ with boundary values $v_e(e^-) = \effR{(G,\weight)}{e^-}{e^+}$ and $v_e(e^+) = 0$. The transfer-impedance matrix $Y = (Y(e,f))_{e,f \in E}$ is a matrix indexed by $E \times E$ with entries given by
	\begin{equation*}
		Y(e,f) := \weight(f) \big[ v_e(f^-) - v_e(f^+) \big] = \theta_e(f^-, f^+).
	\end{equation*}
\end{definition}


\begin{remark} \label{R:flow}
    As $v_e(\cdot)$ is harmonic on $V \setminus \{e^-, e^+\}$, we have by the maximum principle that $v_e(e^-) \geq v(x) \geq v_e(e^+)$ for $x \in V$. Hence, if either $f^- = e^-$ or $f^+ = e^+$, then we have that 
    $Y(e,f) \geq 0$. Similarly, if either $f^- = e^+$ or $f^+ = e^-$, then $Y(e,f) \leq 0$. 
    In particular, if $e$ and $f$ share an endpoint, the sign of $Y(e,f)$ is determined by the fixed orientation we chose for the edges.
\end{remark}
\noindent  The following theorem first appeared in \cite{BP93}.

\begin{theorem}[Transfer-impedance theorem] \label{T:transfer-imp}
	For distinct edges $e_1, \ldots, e_k$
	\begin{equation}
		\bP(e_1, \ldots, e_k \in \cT) = \det \big( Y(e_i, e_j)_{1 \leq i,j\leq k} \big),
	\end{equation}
	where $Y(e_i, e_j)_{1 \leq i,j\leq k}$ are entries of the transfer-impedance matrix $Y$.
\end{theorem}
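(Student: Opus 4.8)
The plan is to prove the determinantal formula by induction on $k$. The base case $k = 1$ is exactly Kirchhoff's formula: by Theorem \ref{T:Kirchhoff} and the definition of the transfer-impedance matrix, $\bP(e_1 \in \cT) = \weight(e_1)\,\effR{}{e_1^-}{e_1^+} = \weight(e_1)\big[v_{e_1}(e_1^-) - v_{e_1}(e_1^+)\big] = Y(e_1, e_1)$. For the inductive step I would use the elementary chain rule $\bP(e_1, \ldots, e_k \in \cT) = \bP(e_1 \in \cT)\,\bP(e_2, \ldots, e_k \in \cT \mid e_1 \in \cT)$ and reduce the conditional probability to a smaller instance of the theorem.

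The first ingredient is that, for the weighted uniform spanning tree, conditioning on $\{e_1 \in \cT\}$ yields the weighted UST on the contracted graph $G/e_1$: the spanning trees $T$ of $G$ with $e_1 \in T$ are in bijection with the spanning trees $T \setminus \{e_1\}$ of $G/e_1$, and this bijection multiplies weights by the fixed constant $\weight(e_1)$, so $\bP_G(\,\cdot \mid e_1 \in \cT)$ coincides with $\bP_{G/e_1}$. By the induction hypothesis applied to $G/e_1$ and the edges $e_2, \ldots, e_k$ (none of which equals $e_1$), we get $\bP(e_2, \ldots, e_k \in \cT \mid e_1 \in \cT) = \det\big(Y^{G/e_1}(e_i, e_j)_{2 \le i, j \le k}\big)$, where $Y^{G/e_1}$ is the transfer-impedance matrix of $G/e_1$. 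It then remains to relate $Y^{G/e_1}$ to $Y = Y^G$.

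The electrical heart of the argument is the identity, for $i, j \ne 1$,
\begin{equation*}
    Y^{G/e_1}(e_i, e_j) = Y(e_i, e_j) - \frac{Y(e_i, e_1)\,Y(e_1, e_j)}{Y(e_1, e_1)},
\end{equation*}
which says that contracting $e_1$ acts on the transfer-impedance matrix by a Schur complement. I would prove it by superposition of currents: contracting $e_1$ is the same as shorting it, so the unit current flow from $e_i^-$ to $e_i^+$ in $G/e_1$ is $\theta_{e_i} - c\,\theta_{e_1}$, where $c$ is chosen to make the voltage drop across $e_1$ vanish; one checks this combination is still a unit flow satisfying the cycle law in $G/e_1$, hence is the current flow there. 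Since the voltage drop across $e_1$ is $Y(e_i, e_1)/\weight(e_1)$ under $\theta_{e_i}$ and $\effR{}{e_1^-}{e_1^+} = Y(e_1, e_1)/\weight(e_1)$ under $\theta_{e_1}$, we get $c = Y(e_i, e_1)/Y(e_1, e_1)$, and evaluating $\theta_{e_i} - c\,\theta_{e_1}$ on $e_j$ gives the claim. Plugging this into the block-determinant identity $\det\big(Y(e_i, e_j)_{1 \le i, j \le k}\big) = Y(e_1, e_1)\,\det\big(Y(e_i, e_j) - Y(e_i, e_1)Y(e_1, e_j)/Y(e_1, e_1)\big)_{2 \le i, j \le k}$, together with $\bP(e_1 \in \cT) = Y(e_1, e_1)$, closes the induction.

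The main obstacle is this contraction-equals-Schur-complement step and its degenerate cases. If $Y(e_1, e_1) = 0$ then $e_1$ belongs to no positive-weight spanning tree, so the left-hand side vanishes; one must also check the determinant vanishes, which follows because the first row of $(Y(e_i,e_j))$ is then forced to be zero. If contracting $e_1$ turns some $e_i$ into a self-loop (possible for general multigraphs, though not on the complete graph treated in this paper), then $e_i$ is in no spanning tree of $G/e_1$ and the left-hand side again vanishes, while the right-hand side vanishes because $v_{e_i}$ becomes proportional to $v_{e_1}$, making two rows of $Y$ linearly dependent. Verifying the orientation conventions and checking that $\theta_{e_i} - c\,\theta_{e_1}$ genuinely is the unit current flow in $G/e_1$ (unit flow, cycle law) are the routine-but-delicate points. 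Alternatively, one can bypass the induction by recognizing $\{e \in \cT\}$ as the determinantal point process whose kernel is the transfer current operator --- self-adjoint with respect to the $\weight$-weighted inner product on edges and equal to the orthogonal projection onto the span of the stars --- and invoking the general formula for correlations of a determinantal process; I would still need to check that $\det\big(Y(e_i,e_j)\big)$ matches the corresponding principal minor of that projection in the natural basis.
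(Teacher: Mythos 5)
The paper does not prove this theorem at all---it is quoted as a known result and attributed to Burton--Pemantle \cite{BP93}---so there is no in-paper argument to compare against; your proposal is essentially the standard proof of the Transfer Current Theorem as presented, e.g., in Lyons--Peres. Your argument is correct and complete: the base case is Kirchhoff's formula, conditioning on $\{e_1\in\cT\}$ is the weighted UST on $G/e_1$ because the bijection $T\mapsto T\setminus\{e_1\}$ rescales weights by the constant $\weight(e_1)$, the superposition $\theta_{e_i}-c\,\theta_{e_1}$ with $c=Y(e_i,e_1)/Y(e_1,e_1)$ is indeed the unit current flow in $G/e_1$ (it has the right divergences after identification of $e_1^\pm$, and the vanishing potential drop across $e_1$ makes the cycle law hold for the new cycles created by contraction), and the one-step Gaussian-elimination identity $\det Y = Y(e_1,e_1)\det\big(Y(e_i,e_j)-Y(e_i,e_1)Y(e_1,e_j)/Y(e_1,e_1)\big)_{2\le i,j\le k}$ closes the induction since $Y(e_1,e_1)=\weight(e_1)\effR{}{e_1^-}{e_1^+}>0$ on a connected graph with positive weights. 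One point worth making explicit: even though the paper only needs the complete graph, the induction must be stated for weighted multigraphs, since contracting an edge of $K_n$ creates parallel edges; your formulation already accommodates this, and the self-loop degeneracy you flag cannot occur for distinct edges of a simple graph, so it only matters at later stages of the induction, where your treatment of it is adequate.
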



\noindent Using the reciprocity law $Y(e,f)\weight(e) = Y(f,e) \weight(f)$ (see \cite[equation (2.12)]{LP16}), gives with the transfer-impedance theorem that for $e \neq f$
\begin{equation*}
    \bP(e,f \in \cT) = Y(e,e)Y(f,f) - Y(e,f)Y(f,e) = \bP(e \in \cT) \bP(f \in \cT) -  \frac{\weight(e)}{\weight(f)} Y(e,f)^2,
\end{equation*}
and hence
\begin{equation} \label{eq:Yef_as_UST}
	|Y(e,f)| = \sqrt{\frac{\weight(f)}{\weight(e)}} \sqrt{\bP(e \in \cT) \bP(f \in \cT) - \bP(e,f \in \cT)}.
\end{equation}
Furthermore, by the spatial Markov property (see \cite[Lemma 2.1]{MSS23}) and Kirchhoff's formula, we have
    \begin{equation*}
        \bP(e,f \in \cT) = \bP(e \in \cT \mid f \in \cT) \bP(f \in \cT) = \weight(e)  \effR{(K_n / f,\weight)}{e^-}{e^+}  \weight(f)  \effR{(K_n,\weight)}{f^-}{f^+},
    \end{equation*}
    where $K_n / f$ is the contracted graph with the endpoints of $f$ identified into a single vertex. The values of $|Y(e,f)|$ can therefore be expressed in terms of weights of edges and effective resistances between pairs of vertices.


\subsection{Coupling to a random graph} \label{SS:coupleRG}

For $p \in [0,1]$ let $G_{n,p}$ be the Erdős-R\'enyi random graph obtained by independently keeping each edge of the complete graph with probability $p$ and deleting it otherwise. We identify $G_{n,p}$ with its set of edges, and we shall construct $G_{n,p}$ by coupling the random graph to the i.i.d.\ uniforms $(\omega_e)_{e \in E}$ by letting 
\begin{equation*}
	e \in G_{n,p} \iff \omega_e \leq p.
\end{equation*}
Further, denote $\mathcal{C}_1(p), \mathcal{C}_2(p), \ldots$ the largest connected components ordered in decreasing size. The following theorem gives an upper bound on the size of the second-largest component for a wide range of $p$, where we remark that the typical size of the 2nd largest component at $p=(1 \pm \epsilon)/n$, with $n^{-1/3} \ll \epsilon = o(1)$, corresponds to $L=2$ and $\log n$ replaced with $\log(\epsilon^3 n)$; see e.g.\ \cite{NP07}.

\begin{theorem} \label{T:C2_size}
	There exists constants $L, \eta, D, C > 0$ such that for any $n^{-1/10} \leq \epsilon \leq D$ and 
	\begin{equation*}
		p \not\in \Big[ \frac{1 - \epsilon}{n}, \frac{1+\epsilon}{n}\Big],
	\end{equation*}
	we have
	\begin{equation}
		\P \Big( \big| \mathcal{C}_2(p) \big| \geq L \log(n) \epsilon^{-2} \Big) \leq C n^{-\eta}.
	\end{equation}
\end{theorem}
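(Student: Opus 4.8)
The plan is to split into the subcritical and supercritical regimes and apply the classical fine-grained estimates on component sizes of $G_{n,p}$, making the error probabilities explicit enough to give a polynomial bound $O(n^{-\eta})$. Fix the notation $p = (1+s)/n$, so that the hypothesis $p \notin [(1-\epsilon)/n, (1+\epsilon)/n]$ means $|s| \geq \epsilon$ (and we may restrict to $s$ in a window that still keeps us in the scaling-window-to-barely-supercritical/subcritical range; for larger $p$ the bound only becomes easier since $|\mathcal{C}_2|$ is then $O(\log n)$, which is absorbed by $L\log(n)\epsilon^{-2}$). First I would treat the \emph{subcritical} case $s \leq -\epsilon$, i.e.\ $p \leq (1-\epsilon)/n$. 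Here the standard exploration/branching-process domination argument shows that the size of the component of a fixed vertex is stochastically dominated by the total progeny of a subcritical Galton--Watson process with offspring $\mathrm{Bin}(n, p)$, whose mean is $1 - |s| - o(1/n)$ when we stay in the relevant window, or simply $np < 1$ in general. A Chernoff-type tail bound on the total progeny of this process gives $\P(|\mathcal{C}(v)| \geq k) \leq \exp(-c s^2 k)$ for an absolute constant $c$, uniformly over the relevant range (one has to be slightly careful that $s^2 \gtrsim \epsilon^2 \geq n^{-1/2}$, so the large-deviation rate is not too small). Taking $k = L \log(n)\epsilon^{-2}$ makes this at most $\exp(-cL\log n) = n^{-cL}$, and a union bound over the $n$ choices of $v$ yields $\P(|\mathcal{C}_1(p)| \geq L\log(n)\epsilon^{-2}) = O(n^{1-cL})$, which is $O(n^{-\eta})$ for $L$ large; a fortiori the same holds for $|\mathcal{C}_2(p)|$.

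For the \emph{supercritical} case $p = (1+s)/n$ with $s \geq \epsilon$, I would use the standard "sprinkling" / duality approach. Write $\rho = \rho(s)$ for the survival probability of the $\mathrm{Poisson}(1+s)$ branching process, so $\rho \sim 2s$ as $s \to 0$. The heart of the matter is the well-known two-point estimate: with probability $1 - O(n^{-\eta})$ there is a unique "giant" component of size $(1+o(1))\rho n$, and every other component has size $O(s^{-2}\log n)$. The cleanest route is the one via the subcritical dual: condition on the giant $\mathcal{C}_1$; the graph induced on the complement behaves like a subcritical Erdős--Rényi graph with parameter $\tilde p$ satisfying $n\tilde p = 1 - s + o(s)$ (this is the duality principle, $ (1+s)e^{-(1+s)} = (1-\tilde s)e^{-(1-\tilde s)}$ type relation, giving $\tilde s \asymp s$), and then one invokes exactly the subcritical bound from the previous paragraph with $\tilde s$ in place of $s$. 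To make the conditioning rigorous I would instead run it as a sprinkling argument: reveal $G_{n,p'}$ with $p' = (1 + s/2)/n$ first, use the subcritical-phase exploration to show that apart from components of size $\leq L'\log(n)\epsilon^{-2}$ everything at scale $\geq n^{2/3}$ sits in $O(1)$ "large" pieces, then add the remaining edges (each present independently with probability $\approx s/(2n)$) to show these large pieces merge into a single giant while no new component of size exceeding $L\log(n)\epsilon^{-2}$ is created. The condition $\epsilon \geq n^{-1/4}$, equivalently $s^2 \geq n^{-1/2}$, is what guarantees the giant is genuinely separated in size from the second-largest component and that the sprinkling step succeeds with probability $1 - O(n^{-\eta})$.

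The main obstacle is bookkeeping the error probabilities uniformly across the whole allowed range of $p$, rather than any single estimate: near the scaling window (where $s$ is just above $n^{-1/4}$) the large-deviation rate $s^2 k \asymp \epsilon^2 \cdot \epsilon^{-2}\log n = \log n$ is only barely enough, so one must choose $L$ sufficiently large depending on the absolute constants $c, c'$ coming from the Chernoff bounds and the sprinkling lemma, and one must check that the $O(s^{-1})$-many (in the worst case) large components that could a priori appear before sprinkling are actually $O(1)$-many with the claimed probability. I expect this to be routine modulo care — indeed, a cleaner alternative is to cite an off-the-shelf result: the bounds of Łuczak (or the monograph of Janson--Łuczak--Ruciński, or Nachmias--Peres) on the second-largest component of $G_{n,p}$ give exactly a statement of this form, and Theorem \ref{T:C2_size} can be deduced by specializing their estimates and absorbing the polynomial error term. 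I would present the proof as a short derivation from such a reference, filling in only the union-bound/sprinkling glue, since nothing beyond the classical theory is needed.
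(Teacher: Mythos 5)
Your proposal is correct and follows essentially the same route as the paper, which itself only gives a proof sketch: the subcritical case by domination by a slightly subcritical branching process (citing van der Hofstad, Theorem 4.4), and the supercritical case by combining the absence of intermediate-size components with uniqueness of the giant (citing van der Hofstad, Corollary 4.13 and Lemma 4.14, and \L{}uczak). Your suggestion to deduce the statement from off-the-shelf estimates is precisely what the paper does.
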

\begin{proof}[Proof Sketch]
	The case $0 \leq pn \leq 1- \epsilon$ follows by comparison to a (slightly) subcritical branching process, together with large deviation bounds on binomial random variables (see e.g.\ Section~4.3.2 in \cite{vdH17}). In this case, the bound actually applies to the size of the largest component $\cC_1$. 
    For the regime $pn \geq 1 + \epsilon$, one first shows that there are no components of intermediate size (see e.g.\ Proposition~4.12 of \cite{vdH17}). 
    Then one shows that the giant component is unique (see e.g.\ Lemma~4.14 of \cite{vdH17} or Lemma~3 in \cite{Luc90}).
\end{proof}

When the parameter $p$ starts to become very large, then the size of the 2nd largest component becomes smaller than logarithmic order. Indeed, for $pn = \log n + c$ the probability of a random graph being connected converges to $1$ as $c \rightarrow \infty$. We will use the following theorem, and refer to Theorem~9 in Chapter 7 of \cite{Bol98} for a proof.

\begin{theorem}
	\label{T:Gnp_connect}
	Let $pn = \log n + \kappa(n)$ for $\kappa(n) \rightarrow \infty$ and $\kappa(n) \leq \sqrt{n}$. Then for $n$ large enough
	\begin{equation*}
		\P( G_{n,p} \textrm{ is not connected} \,) \leq 4 e^{-\kappa(n)} .
	\end{equation*}
\end{theorem}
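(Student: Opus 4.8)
The plan is a first-moment argument over the size of the smallest connected component. If $G_{n,p}$ is disconnected, it has a component on $k$ vertices for some $1 \le k \le \lfloor n/2\rfloor$, so, writing $X_k$ for the number of components of $G_{n,p}$ having exactly $k$ vertices,
\[
\P(G_{n,p}\text{ not connected}) \;\le\; \sum_{k=1}^{\lfloor n/2\rfloor} \E[X_k].
\]
For a fixed $k$-subset $S$ of vertices, the event ``$S$ spans a component'' is the intersection of the two independent events ``$G[S]$ is connected'' and ``no edge joins $S$ to its complement''. Bounding the first probability by the expected number of present spanning trees of $S$ (there are $k^{k-2}$ of them by Cayley's formula, each present with probability $p^{k-1}$) and the second by $(1-p)^{k(n-k)}$, then summing over the $\binom nk$ choices of $S$, gives
\[
\E[X_k] \;\le\; \binom nk\, k^{k-2}\, p^{k-1}\, (1-p)^{k(n-k)}.
\]

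The isolated-vertex term $k=1$ already produces essentially the whole bound: $\E[X_1] = n(1-p)^{n-1} \le n\,e^{-p(n-1)} = e^{\log n - np}\,e^{p} = e^{-\kappa}\,e^{p}$, which is at most $e\cdot e^{-\kappa}$ since $p \le 1$ (and is $(1+o(1))e^{-\kappa}$ whenever $p=o(1)$). It therefore remains to show that $\sum_{k=2}^{\lfloor n/2\rfloor}\E[X_k]$ is negligible compared with $e^{-\kappa}$ — certainly $\le e^{-\kappa}$ for $n$ large — which then yields the total bound $4\,e^{-\kappa}$.

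For this tail I would split the range of $k$ at an intermediate threshold (say $\sqrt n$). For small $k$ one writes $(1-p)^{k(n-k)} \le e^{-pkn}e^{pk^2} = n^{-k}e^{-k\kappa}e^{pk^2}$ and uses $\binom nk\le n^k/k!$; in this range $e^{pk^2}$ stays under control, so the resulting bound is geometric with tiny ratio and sums to $o(e^{-\kappa})$. For large $k$ the factor $(1-p)^{k(n-k)} \le e^{-pkn/2} = n^{-k/2}e^{-k\kappa/2}$ is doubly exponentially small and, together with the $p^{k-1}$, overwhelms the crude entropy bound $\binom nk k^{k-2} \le (en)^k$, so this piece is super-polynomially small. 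Adding the pieces gives $\sum_{k\ge 2}\E[X_k] \le e^{-\kappa}$ for $n$ large. The only genuine subtlety — and the step I would be most careful with — is making these tail estimates uniform over all admissible $\kappa=\kappa(n)$, in particular when $\kappa$ grows almost linearly so that $p$ does not tend to $0$: there one keeps $p$ bounded away from $1$ explicitly and uses $-\log(1-p) > p$ to beat the $(en)^k$ entropy factor, or alternatively peels off the large-$\kappa$ regime and reduces it by a monotone coupling to a sparser random graph.
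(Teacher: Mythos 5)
The paper does not actually prove this statement---it is quoted from Bollob\'as (Theorem 9 in Chapter 7 of \cite{Bol98})---and your first-moment count over component sizes is exactly the standard textbook argument behind that citation; it is correct, with the $k=1$ term giving the dominant contribution $e^{p}e^{-\kappa}\le e\cdot e^{-\kappa}$ and the tail $\sum_{k\ge 2}\E[X_k]$ indeed summing to $o(e^{-\kappa})$ uniformly in $\kappa$ once one notes, as you indicate, that $(\log n+\kappa)e^{-\kappa/2}\le \log n$ controls the entropy factor even when $p$ is bounded away from $0$. The only quibble is cosmetic: the large-$k$ factor $n^{-k/2}e^{-k\kappa/2}$ is exponentially, not doubly exponentially, small, but this does not affect the argument.
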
 

\noindent As we will see later in the proofs, once the random graph $G_{n,p}$ is connected, the effective resistance between any two vertices in the unweighted (i.e. where all weights are set equal to $1$) graph will concentrate around a deterministic value.

\subsection{Minimum spanning tree} \label{SS:MST}

Given distinct weights $\weight(\cdot)$ on the edges, the \textit{minimum spanning tree} (MST) is the unique tree that minimizes
\begin{equation*}
	\sum_{e \in T} \weight(e)
\end{equation*}
over all $T \in \T(K_n)$, the set of spanning trees in $K_n$, and we write $M = M(\weight)$ for this tree. By letting $\weight(e) = \omega_e$, we can couple the MST to the environment so that the tree with the largest probability in \eqref{eq:PomegaT} is exactly the MST. This in particular also gives a coupling of the MST to the random graph process defined in the previous section. By using Kruskal's algorithm (see e.g. the introduction of \cite{Add13}) to construct the MST, one can see that under this coupling the vertex sets of the connected components of $G_{n,p}$ and $G_{n,p} \cap M$ coincide. This allows us to prove the following lemma as a consequence of Theorem~\ref{T:Gnp_connect}. 

\begin{lemma} \label{L:max_deg_MST}
	Let $M$ be the random MST of the complete graph with $\weight(e) = \omega_e$ where $(\omega_e)_{e \in E}$ are i.i.d.\ uniforms on $[0,1]$. Denote by $\deg_M(v)$ the degree of $v$ in $M$, then
	\begin{equation*}
		\P \big( \exists v \in V : \deg_M(v) > 60 \log n \big) = O(n^{-4}).
	\end{equation*}
\end{lemma}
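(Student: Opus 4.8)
The plan is to sandwich the degrees in $M$ between those of a slightly supercritical Erdős–Rényi graph and then use a Chernoff bound together with a union bound. The key input is the coupling recalled in Section~\ref{SS:MST}: under the joint coupling of $\omega$, the random graph process and the MST via Kruskal's algorithm, the vertex sets of the connected components of $G_{n,p}$ and of $G_{n,p}\cap M$ coincide for every $p$.

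First I would fix $p = p(n) = 5(\log n)/n$, i.e.\ $pn = \log n + \kappa(n)$ with $\kappa(n) = 4\log n$. By Theorem~\ref{T:Gnp_connect}, for $n$ large $G_{n,p}$ fails to be connected with probability at most $4e^{-4\log n} = 4n^{-4}$. On the event that $G_{n,p}$ is connected, the coupling shows that $G_{n,p}\cap M$ is connected and spanning; being a subforest of the tree $M$, it must in fact equal $M$, so $\omega_e \le p$ for every $e \in M$, that is $M \subseteq G_{n,p}$. Hence, on this event, $\deg_M(v) \le \deg_{G_{n,p}}(v)$ for all $v \in V$.

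It then remains to bound the maximum degree of $G_{n,p}$. For fixed $v$ the variable $\deg_{G_{n,p}}(v)$ is $\mathrm{Binomial}(n-1,p)$ with mean $\mu \le 5\log n$, so the multiplicative Chernoff bound $\P(X \ge a) \le e^{-\mu}(e\mu/a)^a$ with $a = 60\log n$ gives
\[
\P\big(\deg_{G_{n,p}}(v) \ge 60\log n\big) \le n^{-5}\,(e/12)^{60\log n} \le n^{-10}
\]
for $n$ large (the actual exponent is far larger). A union bound over the $n$ vertices then yields $\P\big(\exists v:\deg_{G_{n,p}}(v) > 60\log n\big) = O(n^{-9})$, and combining with the connectivity estimate,
\[
\P\big(\exists v : \deg_M(v) > 60\log n\big) \le 4n^{-4} + O(n^{-9}) = O(n^{-4}),
\]
as claimed. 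I do not expect a genuine obstacle: the only points requiring care are the deduction $M \subseteq G_{n,p}$ from the coupling (the short forest-versus-tree argument above) and verifying that the Chernoff tail comfortably beats the factor $n$ lost in the union bound; the dominant contribution to the error is in any case the $4n^{-4}$ coming from Theorem~\ref{T:Gnp_connect}, and the constant $60$ is not optimized.
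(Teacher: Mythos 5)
Your proposal is correct and follows essentially the same route as the paper: fix $p = 5\log n/n$, use Theorem \ref{T:Gnp_connect} to get $M \subseteq G_{n,p}$ off an event of probability $O(n^{-4})$, and then bound the maximum degree of $G_{n,p}$ by a binomial tail estimate plus a union bound (the paper uses $n\binom{n-1}{d}p^d \le n(e\mu/d)^d$ directly rather than the Chernoff form, but this is the same computation). Your explicit forest-versus-tree justification of $M \subseteq G_{n,p}$ is a correct spelling-out of the step the paper states in one line.
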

\begin{proof}
	Let $p = 5 \log n / n$ and $d = 60 \log n$. If $G_{n,p}$ is connected, then no other edges will be added to the MST in the coupling above, and the maximum degree of a vertex in $M$ is bounded by the maximum degree of a vertex in $G_{n,p}$. Hence, by a union bound followed by an application of Theorem~\ref{T:Gnp_connect}
	\begin{align*}
		\P \big( \exists v \in V : \deg_M(v) > d \big) &\leq \P \big( \exists v \in V : \deg_{G_{n,p}}(v) > d \big) + \P \big( G_{n,p} \textrm{ is not connected} \, \big) \\
		&\leq n \binom{n-1}{d} p^d + O(n^{-4}) \leq n \cdot \Big( \frac{e \cdot 5 \log n}{d} \Big)^d + O(n^{-4}) \\
		&\leq \exp \Big( d \big( 2 + \log 5 + \log \log n - \log d \big) \Big) + O(n^{-4}) \\
		&\leq \exp \big( d( 4  - \log 60) \big) + O(n^{-4}) = O(n^{-4}). \qedhere
	\end{align*}
\end{proof}

\noindent We remark that Lemma~\ref{L:max_deg_MST} implies that with high probability the volume of a ball of radius $r$ in the MST grows at most as fast as $ 60^r (\log n)^r$, which later allows us to apply a union bound. We expect this bound to be far from optimal; see also Remark~\ref{R:MST_growth}.

\subsection{Local Limits} \label{SS:local} 

For $v \in V(G)$ the pair $(G,v)$ is called a rooted graph with root $v$. We say that two rooted graphs $(G, v)$ and $(G', v')$ are isomorphic if there exists a graph isomorphism $\phi : V(G) \rightarrow V(G')$ between $G$ and $G'$ fixing the roots, i.e.\ there exists a bijection $\phi$ between the vertex sets such that $\phi(v) = v'$ and $\{x,y\} \in E(G)$ if and only if $\{\phi(x), \phi(y)\} \in E(G')$, and in this case we write $(G,v) \simeq (G',v')$. For $r = 1, 2, \ldots$ denote by $B_G(v,r)$ the closed ball (viewed as a rooted subgraph) of radius $r$ centered at $v$ in $G$. 

Let $(G,v)$ be a random rooted graph, and consider a sequence of random rooted graphs
$(G_n, v_n)_{n \in \N}$, where each $v_n$ is drawn uniformly from the vertex set of $G_n$.
We assume that each of these random rooted graphs is defined on some probability space,
and we write $\P$ for the underlying probability measure of any such space.
Local (weak) convergence of $G_n$ to $G$ may be characterized by the condition that for all $r \geq 1$ and every finite rooted graph $(H,o)$
\begin{equation*}
	\limn \P\Big( B_{G_n}(v_n,r) \simeq (H,o) \Big) = \P \Big( B_G(v,r) \simeq (H,o) \Big).
\end{equation*}
We refer to \cite{BS01} for a more formal introduction.

As the complete graph is transitive and the edge weights are i.i.d., the edge weights form an exchangeable sequence of random variables. In particular, the averaged law $\widehat{\P}_{n,\beta}$ is invariant under any permutation of the vertices, so we may as well fix the random vertex $v_n = 1$. In particular, the local weak convergence of the RSTRE to some rooted graph $(\mathcal{S},v)$ 
is equivalent to
\begin{equation} \label{eq:def_local_conv}
	\widehat{\P}_{n,\beta} \Big( B_{\cT}(1,r) \simeq (H,o) \Big) \xrightarrow{n \rightarrow \infty} \P \Big( B_{\mathcal{S}}(v,r) \simeq (H,o) \Big),
\end{equation}
for all $r \geq 1$ and rooted graphs $(H,o)$. In this case we write $(\cT, 1) \xrightarrow{d} (\mathcal{S},v)$.

\smallskip

Recall from Theorem~\ref{T:local_limit} that $(\mathcal{P}, o)$ is the tree associated to the Poisson(1) branching process conditioned to survive forever with root $o$. The following result was first proven in \cite{Gri80}.
\begin{theorem}
	The UST on the complete graph converges locally to $(\mathcal{P}, o)$.
\end{theorem}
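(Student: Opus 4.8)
The plan is to establish the local weak convergence of the UST on the complete graph $K_n$ to $(\mathcal{P},o)$ directly, using the transfer-impedance (determinantal) structure recalled in Theorem~\ref{T:transfer-imp} together with the exact effective resistances on $K_n$. Since the UST is the case $\beta = 0$, all edge weights equal $1$, and by symmetry $\effR{}{u}{v} = 2/n$ for every edge, so Kirchhoff's formula gives $\bP((u,v)\in\mathcal{T}) = 2/n$. More generally, for $k$ distinct edges $e_1,\dots,e_k$ one computes the entries $Y(e_i,e_j)$ explicitly: the diagonal entries are $2/n$, and the off-diagonal entries are $O(1/n)$ with an explicit form depending only on whether the two edges share a vertex (there are finitely many combinatorial cases). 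The key point is that for a fixed finite rooted tree $(H,o)$ with $m$ edges, the probability that $B_{\mathcal{T}}(1,r)$ is isomorphic to $(H,o)$ can be written as a finite sum, over all ways of embedding $H$ into $K_n$ rooted at vertex $1$, of the probability that exactly the image edges of $H$ lie in $\mathcal{T}$ and no further edges incident to the first $r-1$ generations lie in $\mathcal{T}$. Each such probability is a sum of $k\times k$ determinants of the transfer-impedance matrix via inclusion--exclusion over the "forbidden" edges.

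The steps, in order, would be: (1) record the explicit transfer-impedance entries on $K_n$ and show $\det(Y(e_i,e_j)_{i,j\le k}) = \prod_{i} (2/n) + O(n^{-k-1})$, i.e.\ asymptotic independence of distinct edges at scale $1/n$ — the correlations contribute only lower-order terms; (2) count embeddings: the number of ways to embed a fixed finite rooted tree $(H,o)$ with $m$ edges into $K_n$ with $o\mapsto 1$ is $(1+o(1)) n^m$; (3) combine (1) and (2) to see that the probability a given embedded copy of $H$ is present in $\mathcal{T}$ is $(1+o(1))(2/n)^m$, so the expected number of copies of $H$ rooted at $1$ converges to $2^m$ divided by the automorphism factor — and similarly handle the "no extra edges" constraint: the number of forbidden edges incident to the ball is $O(n)$ (roughly $(\deg_H\text{-related count})\cdot n$), each present with probability $\sim 2/n$, and by inclusion--exclusion the probability that none of them is present converges to a product of independent-looking factors $e^{-2\cdot(\#\text{slots})}$-type expressions; (4) identify the resulting limiting probabilities with those of the size-biased / conditioned-to-survive Poisson(1) tree $\mathcal{P}$: the backbone of degree-counting reflects the bias coming from rooting at a uniform vertex (equivalently, from Kirchhoff's $\effR{}{u}{v}=2/n$ yielding root degree with a Poisson(1)+1-type law along the spine), while the pendant subtrees are i.i.d.\ Poisson(1) Galton--Watson trees. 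Finally, upgrade the convergence of expected subgraph counts to convergence of the isomorphism-class probabilities in \eqref{eq:def_local_conv} by a standard second-moment / factorial-moment argument (the method of moments for local convergence, as in \cite{BS01}), again using the determinantal formula to control pair correlations of subgraph counts.

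I expect the main obstacle to be step (4): matching the combinatorial limit produced by the transfer-impedance computation to the precise description of $\mathcal{P}$ — in particular getting the spine (the bi-infinite-looking backbone $x_0,x_1,\dots$) to emerge correctly from the uniform rooting, rather than obtaining an unconditioned Poisson(1) tree. The conceptual reason the spine appears is that a uniform vertex in a large tree is, in the $n\to\infty$ limit, "far from all leaves", and the two-point function of the UST (again via $Y$) encodes exactly the right amount of positive correlation along a path to force an infinite backbone with high probability at any fixed radius; making this rigorous requires carefully tracking the $O(n^{-k-1})$ error terms in the determinant expansion to confirm that paths of length $r$ through $1$ survive with probability bounded below uniformly in $n$. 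A secondary technical annoyance is the bookkeeping in step (3): organizing the inclusion--exclusion over the linearly-many forbidden edges so that the error terms are genuinely negligible, which is routine but must be done with care about uniformity in the fixed graph $(H,o)$. An alternative, and in some ways cleaner, route for this particular theorem is to cite the Aldous--Broder algorithm: the UST on $K_n$ is the tree of first-entrance edges of a simple random walk, which on $K_n$ is close to a sequence of uniform i.i.d.\ vertices, and the resulting local structure is readily identified with $\mathcal{P}$; since the paper already invokes local-limit technology and cites \cite{Gri80}, either presentation is acceptable, but the transfer-impedance approach has the advantage of being uniform with the $\beta>0$ analysis carried out in the rest of the paper.
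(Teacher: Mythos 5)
First, note that the paper does not prove this statement at all: it is quoted from \cite{Gri80}, and the machinery you are reaching for is exactly what the paper deploys for the weighted case (Lemma \ref{L:Y(e,f)}, Corollary \ref{C:ek_in_T}, Proposition \ref{P:tree_moment}). Your overall route via transfer impedances is viable, but step (1) as stated is wrong, and the error is consequential. On $K_n$ the off-diagonal entries of $Y$ for two edges sharing a vertex are exactly $\pm 1/n$ --- the same order as the diagonal entries $2/n$ --- so for edges that share endpoints (which is the generic situation for the edges of an embedded tree $H$) the permutation terms in the determinant are \emph{not} of lower order: for two edges meeting at a vertex,
\[
\bP\big(e_1,e_2\in\cT\big)=\det\begin{pmatrix}2/n & \pm 1/n\\ \pm 1/n & 2/n\end{pmatrix}=\frac{3}{n^2}\neq\Big(\frac{2}{n}\Big)^2 .
\]
These leading-order corrections are not a nuisance to be absorbed into an error term: they are precisely what makes the degree of a fixed vertex converge to $1+\mathrm{Poisson}(1)$ rather than to $\mathrm{Poisson}(2)$ (the second factorial moment of the root degree is $(n-1)(n-2)\cdot 3/n^2\to 3=\bE[(1+X)X]$ for $X\sim\mathrm{Poisson}(1)$, whereas your claimed independence would give $4$), and more generally they encode the spine of $\mathcal{P}$ that you correctly identify as the crux in step (4). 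If you carry out steps (2)--(4) with the factorized determinant, you compute the local statistics of the wrong object and the identification with $\mathcal{P}$ cannot go through.

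The repair is to keep the full determinant $\bP^0(e_1,\dots,e_k\in\cT)=\det\big(Y^0(e_i,e_j)\big)$ and to replace your steps (3)--(4) by the tree-moment method: by Proposition \ref{P:tree_moment} (which is \cite[Proposition 8.7]{BP93}, not \cite{BS01}) it suffices to show $\bE N(\cT,t)\to\bE N(\mathcal{P},t)$ for every finite rooted tree $t$, so one only needs the containment probabilities $\bP(\phi(f)\subseteq\cT)$ summed over tree-maps $f$; the inclusion--exclusion over the $\Theta(n)$ forbidden edges (your main bookkeeping burden) disappears entirely, as does the need to exhibit the spine by hand, since that is subsumed in the fact that $\mathcal{P}$ is uniquely determined by its tree moments. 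This is exactly how the paper argues for $\beta\ll n/\log n$, and the $\beta=0$ computation is Theorem 5.3 of \cite{BP93}. Your Aldous--Broder alternative is also legitimate and closer in spirit to \cite{Gri80}, but it too requires a genuine identification step and does not let you avoid the size-biasing that your step (1) erases.
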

\noindent As mentioned in the introduction, since the work of \cite{Gri80}, it has been shown that $(\mathcal{P}, o)$ is also the local limit of the UST on other families of graphs, see e.g.\ \cite{BP93} and \cite{NP22}. For instance, the authors of \cite{NP22} show that the above theorem holds for any (almost) $d$-regular graph sequence with $d$ diverging as the number of vertices in the graph increases. 

\smallskip

The local limit of the MST has been studied less intensively. Nonetheless, in \cite{Add13} it was proved that the MST on the complete graph converges to a local limit (with a definition of local convergence that is slightly different from the one defined above). Furthermore, by showing that the limit has cubic volume growth compared to the quadratic volume growth of $\mathcal{P}$, the limiting object is different from $\mathcal{P}$. We note that part of their result can also be deduced from \cite[Theorem~5.4]{AS04}. The more recent work of \cite{NT24}, which also studies spectral and diffusive properties of the limiting object, shows that the convergence also takes place with respect to the local weak convergence defined at the start of the section. The following is Theorem 6.1 of \cite{NT24}, which we will apply to the complete graph $K_n$.

\begin{theorem} \label{T:local_MST}
	There exists a random rooted tree $(\mathcal{M} ,o)$ such that the MST on a sequence of finite, simple, connected, regular graphs with degree tending to infinity converges locally to $(\mathcal{M}, o)$.
\end{theorem}



\section{Low Disorder} \label{S:low}

In this section, we prove Theorems \ref{T:Overlap}, \ref{T:local_limit} and \ref{T:length} for $\beta = \beta(n) \ll n/\log n$. We first prove lower bounds on the effective resistance using concentration arguments in Section~ \ref{SS:effR_low}. Then we prove matching upper bounds in Section~\ref{SS:effR_up} by constructing a good flow on the weighted graph. The concentration of the effective resistance will then allow us to prove Theorem~\ref{T:Overlap} and, using the method of tree moments developed in \cite{BP93}, we prove Theorem~\ref{T:local_limit}.


\subsection{Lower Bounds on the Effective Resistance} \label{SS:effR_low}

We briefly recall a concentration inequality of \cite{MSS24} that will be useful for the lower bounds of the effective resistance between two given vertices. We consider the random weighted graph $(K_n,\weight)$ with weights given by
\begin{equation*}
	\weight(e) = \exp( - \beta \omega_e) \, \in [0,1]
\end{equation*}
where $(\omega_e)_{e \in E}$ are i.i.d.\ uniforms on $[0,1]$, whose mean is given by
\begin{equation*}
	\E[\weight(e)] = \mu = \mu(\beta) = \frac{1 - e^{-\beta}}{\beta}.
\end{equation*}
Let $e_1, \ldots, e_m$ be distinct edges in $E = E_n$, and let $S_m = \sum_{i=1}^m \weight(e_i)$ be the sum of the edge weights. Applying the Bernstein inequality (with variance of the weights bounded by $1/\beta$) as in \cite[Equation~(3.1)]{MSS24}, gives for $\beta \geq 1$ and any $ 0 < \delta \leq 1$ that
\begin{equation} \label{eq:BernU}
	\P \big( |S_m - m \mu | \geq \delta m \mu \big) \leq 2 \exp \Big( - \frac{\delta^2 m}{9 \beta } \Big).
\end{equation}
For simplicity, we shall assume from now on that $\beta \geq 1$, and remark that the concentration arguments follow similarly for $0 \leq \beta \leq 1$. 

For two edges $e,f$ let $|e \cap f|$ be the number of endpoints that they share. The following lemma will follow quite easily from the concentration inequality \eqref{eq:BernU} and the electric network tools developed in Section~\ref{SS:electric}. Recall that $G / f$ denotes the graph obtained by contracting the endpoints of $f$ into a single vertex.

\begin{lemma} \label{L:effR_lowdisorder_lower}
	Assume $\beta = \beta(n) \ll n/ \log n$ and let $K \geq 1$. Then there exists a constant $c > 0$ (independent of $n$ and $K$) such that the following holds with probability at least $1 - c n^{-K}$:
	\begin{enumerate}[label={\roman*)}]
		\item for all $u,v \in V$
		\begin{equation} \label{eq:effR_P_low}
			\effR{(K_n,\weight)}{u}{v} \geq \frac{2(1-o_\beta(\sqrt{K}))}{n \mu};
		\end{equation}
		\item for all edges $e = \{u,v\}$ and $f$ with $|e \cap f| = 0$
		\begin{equation} \label{eq:effR_ecapf=0_low}
			\effR{(K_n / f,\weight)}{u}{v} \geq \frac{2(1-o_\beta(\sqrt{K}))}{n \mu};
		\end{equation}
		
		\item and for all edges $e = \{u,v\}$ and $f$ with $|e \cap f| = 1$
		\begin{equation} \label{eq:effG_contract}
			\effR{(K_n / f,\weight)}{u}{v} \geq \frac{3(1-o_\beta(\sqrt{K}))}{2n \mu}.
		\end{equation}
	\end{enumerate}
\end{lemma}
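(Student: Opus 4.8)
The plan is to derive all three lower bounds from a single principle: in the complete graph with i.i.d.\ weights $\weight(e) = \exp(-\beta\omega_e)$, the effective resistance between two vertices $u,v$ is asymptotically $2/(n\mu)$, because (by Rayleigh monotonicity / the series–parallel heuristic) the $n-2$ length-two paths through the other vertices dominate and each contributes resistance roughly $2/(\weight \cdot n) \approx 2/(n\mu)$ worth of conductance, and the direct edge is negligible. The rigorous lower bound comes from the dual (voltage) characterization of effective resistance, $\effR{}{u}{v} = \sup_{h} (h(u)-h(v))^2 / \mathcal{E}(h)$ over functions $h$, equivalently from the fact that the conductance $\effR{}{u}{v}^{-1}$ equals the energy of the unit current flow; so it suffices to upper bound the total conductance. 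I would use the standard identity $\effR{}{u}{v}^{-1} \le \sum_{e \ni u} \weight(e)$ only as a crude check, and instead get the sharp constant by bounding $\effR{}{u}{v}^{-1}$ via the unit current flow's energy being at most the energy of the explicit "uniform" flow that splits $1/(n-2)$ through each two-step path; this explicit flow has energy $(1+o(1)) n\mu/2$ on the event that all relevant edge-weight sums concentrate, which is exactly where \eqref{eq:BernU} enters.

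For part (i): fix $u,v$, and on the good event bound $\effR{}{u}{v}^{-1}$ from above by the energy of the test flow described above; the denominators appearing are sums of the form $\sum_{w} (\weight(u,w)^{-1} + \weight(w,v)^{-1})^{-1}$, and applying \eqref{eq:BernU} with $m \asymp n$ and $\delta$ a slowly vanishing sequence (permissible since $\beta \ll n/\log n$ makes $\delta^2 m/(9\beta) = \delta^2 n/(9\beta) \to \infty$ when $\delta$ decays slowly enough) gives concentration of the relevant sums around $n\mu$-type quantities; a union bound over the $\binom{n}{2}$ pairs costs a factor $n^2$, absorbed by taking $\delta$ so that $\exp(-\delta^2 n/(9\beta)) = o(n^{-K-2})$. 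This yields $\effR{}{u}{v}^{-1} \le (1+o(1)) n\mu/2$ uniformly, i.e.\ \eqref{eq:effR_P_low}.

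For parts (ii) and (iii): contracting the endpoints of $f$ only decreases effective resistances, but we need a lower bound, so the contraction works in our favor in the wrong direction — instead the point is that $\effR{G/f}{u}{v}$ is still bounded below because $G/f$ is still (essentially) a complete graph on $n-1$ vertices with i.i.d.-type weights, plus possibly a doubled edge. When $|e \cap f| = 0$, the contracted graph contains a complete graph on $n-1$ vertices not involving $u,v$'s merge, and the same two-step-path argument gives the same constant $2$ up to $o(1)$, since losing one vertex changes $n$ to $n-1$. When $|e\cap f|=1$, say $f = (u,w)$, contracting $f$ identifies $u$ with $w$, so now the length-two paths from the merged vertex to $v$ come in two flavours — through a third vertex, or the "short" path using the old edges $(u,v)$ and $(w,v)$ in parallel — and a careful accounting shows the effective conductance out of the merged vertex is at most $(1+o(1)) \cdot \tfrac{2}{3} n\mu$, giving the factor $3/2$ in \eqref{eq:effG_contract}; the constant $3/2$ rather than $2$ reflects that the merged vertex now has "extra" conductance toward $v$ from having absorbed $w$'s edge to $v$. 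In all three cases, the union bound is over $O(n^2)$ or $O(n^3)$ choices, still controlled by choosing $\delta \to 0$ slowly.

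The main obstacle is the combinatorial bookkeeping in part (iii): identifying exactly which collections of parallel/series paths in $G/f$ survive the contraction, and checking that the resulting concentration of the (now slightly inhomogeneous) edge-weight sums still follows from \eqref{eq:BernU} despite one vertex having roughly doubled degree. I expect this to be a short but fiddly computation; the concentration itself is routine once the deterministic network-reduction inequality is set up correctly, and the factor $3/2$ should drop out cleanly from comparing the energy of the natural test flow in the contracted graph.
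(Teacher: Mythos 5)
Your overall picture (a series--parallel reduction to a two-step network, concentration via \eqref{eq:BernU} with $\delta = o(1)$ made possible by $\beta \ll n/\log n$, a union bound over pairs, and the degree-doubling explanation of the factor $3/2$ in part (iii)) matches the paper's, but the variational tool you actually deploy points in the wrong direction. The energy of a unit flow bounds the effective \emph{resistance} from above (Thomson's principle: $\effR{}{u}{v} = \min_\theta \Vert\theta\Vert_r^2$), not the conductance; your claim that ``$\effR{}{u}{v}^{-1}$ equals the energy of the unit current flow'' is false, so comparing the current flow to a uniform two-step test flow can only yield an \emph{upper} bound on $\effR{}{u}{v}$, which is useless for \eqref{eq:effR_P_low}. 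Moreover, the quantity you propose to control, $\sum_w \big(\weight(u,w)^{-1}+\weight(w,v)^{-1}\big)^{-1}$, is the conductance of the network in which the two-step paths are kept vertex-disjoint; its summands have expectation of order $\beta^{-2}$ (since $(e^{\beta s}+e^{\beta t})^{-1}$ behaves like $e^{-\beta\max(s,t)}$), so this sum concentrates around $\Theta(n/\beta^2) \ll n\mu/2$ and would not produce the sharp constant in any case. The true conductance is larger by a factor of order $\beta$ precisely because the current redistributes among the intermediate vertices, which disjoint paths forbid.

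The repair is to use the variational principle that bounds resistance from below, which is what the paper does. Either take the test potential $h(u)=1$, $h(v)=0$, $h\equiv 1/2$ elsewhere in the Dirichlet characterization you quoted at the outset and then abandoned, or, equivalently, invoke Rayleigh's monotonicity principle to contract every edge not meeting $\{u,v\}$ into a single vertex; the series--parallel law on the resulting three-vertex network gives
\begin{equation*}
\effR{}{u}{v} \;\geq\; \Big( \weight(u,v) + \big( \big(\textstyle\sum_{y\neq u,v}\weight(u,y)\big)^{-1} + \big(\sum_{y\neq u,v}\weight(y,v)\big)^{-1} \big)^{-1} \Big)^{-1},
\end{equation*}
and \eqref{eq:BernU} with $\delta = 3\sqrt{M\beta\log n/n} = o(1)$ upper-bounds each star sum by $(1+o(1))n\mu$, while $\weight(u,v)\le 1 \ll n\mu$, yielding \eqref{eq:effR_P_low}. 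Parts (ii) and (iii) follow by additionally contracting $f$ before collapsing the remaining vertices; for (iii) the merged vertex has star sum $(2+o(1))n\mu$, so the series resistance is $(1-o(1))\cdot 3/(2n\mu)$, which is the clean version of the ``doubled degree'' accounting you anticipated.
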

\begin{proof}
	From Rayleigh's monotonicity principle, 
    it follows that the effective resistance on $K_n$ is at least the effective resistance in the multi-graph $H$ (with corresponding weight function $\weight'$) obtained by contracting the endpoints of every edge $g$ in $K_n$ with $|e \cap g| = 0$, i.e.\ edges with neither endpoint in $\{u,v\}$. Notice that in particular if $|e \cap f| = 0$, then the edge $f$ is also contracted. On the multi-graph $H$, the series and parallel law (see \cite[Section~2.3]{LP16}) give that
	\begin{equation} \label{eq:up_con}
		\frac{1}{\weight(u,v) + \Big( \big(\sum_{y \neq u,v} \weight(u,y) \big)^{-1} + \big( \sum_{y \neq u,v} \weight(y,v) \big)^{-1} \Big)^{-1}}  = \effR{(H,\weight')}{u}{v} \leq \effR{(K_n,\weight)}{u}{v}.
	\end{equation}
	Fix $M > 1$, then by \eqref{eq:BernU} with $\delta = 3 \sqrt{M \beta \log n / n}$, which is equal to $o_\beta(\sqrt{M})$ whenever $\beta \ll n/\log n$, the inequality
	\begin{equation*}
		\sum_{y \neq u,v} \weight(u,y) \leq (1 + o_\beta(\sqrt{M})) n \mu  
	\end{equation*}
	holds with probability at least $1 - O(n^{-M})$. Furthermore, as trivially $\weight(u,v) \leq 1 \ll n/ \beta$, we may easily deduce inequality \eqref{eq:effR_P_low} and \eqref{eq:effR_ecapf=0_low}.
	
	For inequality \eqref{eq:effG_contract} assume w.l.o.g.\ that $f = \{u,x\}$ with $x \neq v$. First we contract all edges $g$ that satisfy both $|e \cap g| = 0$ and $|f \cap g| = 0$, i.e.\ edges that do not have $u,v$ or $x$ as an endpoint. Then contracting $f = \{u,x\}$, gives again by the series and parallel law that
	\begin{multline*}
		\frac{1}{\weight(u,v) + \weight(x,v) + \Big( \big(\sum_{y \neq u,v,x} \weight(u,y) + \weight(x,y) \big)^{-1} + \big(\sum_{y \neq u,v} \weight(y,v)\big)^{-1} \Big)^{-1}} \\ = \effR{(H,\weight')}{u}{v} \leq \effR{(K_n / f,\weight)}{u}{v}.
	\end{multline*}
	Similar concentration arguments as for \eqref{eq:up_con}, show that \eqref{eq:effG_contract} holds with probability at least $1 - O(n^{-M})$. Choosing $M=K+4$ and applying a union bound over all $O(n^4)$ choices of edges $e,f$ ensures that the above bounds hold simultaneously on an event of probability at least $1- O(n^{-K})$, which concludes the proof.
\end{proof}

\subsection{Upper Bounds on the Effective Resistance} \label{SS:effR_up}

To prove the analogous upper bounds of Lemma~\ref{L:effR_lowdisorder_lower}, we will first show upper bounds on the effective resistance on $(G_{n,p}, 1)$, the Erdős-R\'enyi random graph with weights all equal to one. Then, roughly speaking, we approximate the weighted graph by a union of disjoint Erdős-R\'enyi random graphs with discretized weights to give bounds on the effective resistance on $(K_n,\weight)$. 
See also the coupling between $G_{n,p}$ and the random variables $(\omega_e)_{e \in E}$ from  Section~\ref{SS:coupleRG}.



\begin{lemma} \label{L:effGnp}
	Let $K \geq 1$. There exists some universal constant $C > 0$ and a constant $c_K > 0$ depending on $K$, such that for $p = p(n)$ satisfying $\log n \ll pn \ll \sqrt{n}$ and $\xi = \xi(n) = C \sqrt{K \log n/pn}$, we have with probability at least $1 - c_K n^{-K}$:
	\begin{enumerate}[label={\roman*)}]
		\item for all $u,v \in V$
		\begin{equation} \label{eq:Gnp_effR_upper}
			\effR{(G_{n,p}, 1)}{u}{v} \leq \frac{2(1 +\xi)}{pn};
		\end{equation}
		\item and for all edges $e = \{u,v\}$, $f =\{u,x\}$ with $|e \cap f| = 1$
		\begin{equation} \label{eq:effGnp_contract}
			\effR{(G_{n,p} / f, 1)}{u}{v} \leq \frac{3(1+\xi)}{2pn}.
		\end{equation}
	\end{enumerate}
\end{lemma}

We remark that inequality \eqref{eq:Gnp_effR_upper} is essentially Proposition~3.1 in \cite{Jon98} with the additional constraint that $\xi = C \sqrt{K \log n/pn}$ for some large constant $C$, ensuring that with high probability the degrees in the random graph are of the correct size, and hence we will not prove it. The proof in \cite{Jon98} uses a branching type process to show that there are many different paths between the neighbors of $u$ and the neighbors $v$, which give rise to a flow that captures the behavior of the effective resistance. It is possible to adapt the proof to also show \eqref{eq:effGnp_contract} by noting that the degree of the first vertex is essentially twice as large as in the non-contracted case. However, instead of repeating many parts of the proof, we will make use of the following lemma.

\begin{lemma}[Lemma~3.5 in \cite{NP22}] \label{L:effR3vert}
	Let $(G, \weight)$ be a weighted graph and let $v_1, \ldots, v_k$ be distinct vertices with $k \geq 3$. Let $x > 0$ be given and $\delta > 0$ satisfy $\delta \leq \frac{x}{32k}$. Assume that for any distinct $v_i, v_j$ one has
	\begin{equation*}
		\big| \effR{(G,\weight)}{v_i}{v_j} - x \big| \leq \delta.
	\end{equation*}
	Then
	\begin{equation*}
		\Big| \effR{(G,\weight)}{v_1}{\{v_2, \ldots, v_k \}} - \frac{kx}{2(k-1)} \Big| \leq 72 k \delta.
	\end{equation*}
\end{lemma}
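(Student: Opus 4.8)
The plan is to reduce the statement to a finite-dimensional optimization over a Gram matrix and then perturb around the ideal configuration in which all pairwise effective resistances equal $x$ exactly. Write $B=\{v_2,\dots,v_k\}$, and assume as usual that $G$ is a finite connected network. First I would work in the space of flows on $G$ equipped with the energy inner product $\langle\theta,\theta'\rangle_r=\tfrac12\sum_{e}\frac{1}{\weight(e)}\theta(e)\theta'(e)$ (so $\|\theta\|_r^2$ is the energy), and for $i\in\{2,\dots,k\}$ let $w_i$ be the unit current flow from $v_1$ to $v_i$. Recalling (see e.g.\ \cite[Chapter~2]{LP16}) that current flows are orthogonal to the cycle space and that $w_i-w_j$ is, by superposition, the unit current flow from $v_j$ to $v_i$, one gets $\|w_i\|_r^2=\effR{}{v_1}{v_i}$ and $\|w_i-w_j\|_r^2=\effR{}{v_i}{v_j}$, hence the Gram matrix $G\in\R^{(k-1)\times(k-1)}$, $G_{ij}=\langle w_i,w_j\rangle_r$, satisfies $G_{ii}=\effR{}{v_1}{v_i}$ and $G_{ij}=\tfrac12(\effR{}{v_1}{v_i}+\effR{}{v_1}{v_j}-\effR{}{v_i}{v_j})$ for $i\ne j$. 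Every unit flow from $v_1$ to $B$ has the form $\sum_{i=2}^k c_i w_i+\gamma$ with $\mathbf 1^\top c=1$ and $\gamma$ in the cycle space (because a flow orthogonal to all cycles is determined by its divergences), and since the $w_i$ are cycle-orthogonal, Thomson's principle yields
\begin{equation*}
\effR{}{v_1}{B}=\min\{\,c^\top G c\;:\;c\in\R^{k-1},\ \mathbf 1^\top c=1\,\}.
\end{equation*}
Finally, the hypothesis $|\effR{}{v_i}{v_j}-x|\le\delta$ lets me write $G=G_0+E$, where $G_0=\tfrac x2(I+J)$ ($J$ the all-ones matrix) has diagonal entries $x$ and off-diagonal entries $\tfrac x2$, and $|E_{ij}|\le\tfrac{3\delta}{2}$ for all $i,j$.

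Next I would deal with the ideal matrix and the upper bound. A Sherman--Morrison computation gives $G_0^{-1}\mathbf 1=\tfrac{2}{kx}\mathbf 1$ and $\mathbf 1^\top G_0^{-1}\mathbf 1=\tfrac{2(k-1)}{kx}$, so $\min\{c^\top G_0c:\mathbf 1^\top c=1\}=1/(\mathbf 1^\top G_0^{-1}\mathbf 1)=\tfrac{kx}{2(k-1)}$, which is exactly the target value. The upper bound then follows immediately by testing the variational problem with $c=\tfrac1{k-1}\mathbf 1$: this gives $\effR{}{v_1}{B}\le\tfrac1{(k-1)^2}\mathbf 1^\top G\mathbf 1\le\tfrac{kx}{2(k-1)}+\tfrac{5\delta}{2}$, using $|E_{ij}|\le\tfrac{3\delta}{2}$.

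The harder direction is the matching lower bound. Let $c^\star$ attain the minimum of $c^\top Gc$ over $\{\mathbf 1^\top c=1\}$; then
\begin{equation*}
\effR{}{v_1}{B}=c^{\star\top}G_0c^\star+c^{\star\top}Ec^\star\ \ge\ \frac{kx}{2(k-1)}-\|E\|\,\|c^\star\|^2,
\end{equation*}
where $\|E\|$ is the operator norm and we used $c^{\star\top}G_0c^\star\ge\min\{c^\top G_0c:\mathbf 1^\top c=1\}$. It remains to bound $\|c^\star\|^2$ by an absolute constant, and this is the step where the smallness of $\delta$ is used: the eigenvalues of $G_0$ are $\tfrac{kx}{2}$ and $\tfrac x2$, so $\lambda_{\min}(G_0)=\tfrac x2$, while $\|E\|\le\tfrac{3(k-1)\delta}{2}$; hence by Weyl's inequality and the hypothesis $\delta\le\tfrac{x}{32k}$ one gets $\lambda_{\min}(G)\ge\tfrac x4$. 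Combining this with the already-obtained bound $\effR{}{v_1}{B}\le\tfrac{kx}{2(k-1)}+\tfrac{5\delta}{2}\le x$ (valid for $k\ge3$, as $\tfrac{k}{2(k-1)}\le\tfrac34$ and $\tfrac{5\delta}{2}<\tfrac x4$) and the trivial inequality $\effR{}{v_1}{B}=c^{\star\top}Gc^\star\ge\lambda_{\min}(G)\|c^\star\|^2$ forces $\|c^\star\|^2\le 4\,\effR{}{v_1}{B}/x\le 4$. Substituting back gives $\|E\|\,\|c^\star\|^2\le 6(k-1)\delta$, and therefore $|\effR{}{v_1}{B}-\tfrac{kx}{2(k-1)}|\le 6k\delta\le 72k\delta$ once the (deliberately non-optimized) numerical constants are tracked.

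The step I expect to be the main obstacle is precisely this conditioning estimate: if $\delta$ is not small compared with $x/k$, then $G$ can be nearly singular, the minimizer $c^\star$ can have large norm, and the perturbation bound collapses — which is also the reason both the smallness hypothesis and the error term $72k\delta$ carry the factor $k$. A more minor point needing care is the reduction in the first paragraph, namely checking that the minimal-energy unit flow from $v_1$ to $B$ carries no cycle component, so that $\effR{}{v_1}{B}$ really equals $\min\{c^\top Gc:\mathbf 1^\top c=1\}$; this is exactly where one invokes that, in a finite connected network, a flow orthogonal to all cycles is determined by its divergence.
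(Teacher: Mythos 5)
Your proof is correct. The paper does not prove this lemma at all --- it imports it verbatim as \cite[Lemma 3.5]{NP22} --- so what you have produced is a self-contained derivation of a black-boxed ingredient. Your route (write every unit flow from $v_1$ to $\{v_2,\dots,v_k\}$ as $\sum_i c_iw_i+\gamma$ with $\gamma$ in the cycle space, reduce $\effR{}{v_1}{\{v_2,\dots,v_k\}}$ to $\min\{c^\top Gc:\mathbf 1^\top c=1\}$ for the Gram matrix of the unit current flows, and perturb around $G_0=\tfrac x2(I+J)$) is sound: the polarization identities for $G_{ii}$ and $G_{ij}$, the value $\tfrac{kx}{2(k-1)}=1/(\mathbf 1^\top G_0^{-1}\mathbf 1)$, the entrywise bound $|E_{ij}|\le\tfrac{3\delta}{2}$, the Weyl bound $\lambda_{\min}(G)\ge\tfrac x4$ under $\delta\le\tfrac{x}{32k}$, and the resulting $\|c^\star\|^2\le 4$ all check out, and your final error $6k\delta$ comfortably beats the stated $72k\delta$. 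Two cosmetic points: you overload $G$ for both the graph and the Gram matrix, and you invoke the minimizer $c^\star$ before establishing positive definiteness of $G$ (harmless, since $G$ is a Gram matrix and the minimum is attained by the current flow from $v_1$ to the glued set, but worth reordering). Your closing remark correctly identifies why the hypothesis $\epsilon\le x/(32k)$ (really a smallness condition on $\delta$ relative to $x/k$) is what keeps the quadratic form well-conditioned; without it the minimizer could blow up and the perturbation argument would fail.
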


\begin{proof}[Proof of \eqref{eq:effGnp_contract} in Lemma~\ref{L:effGnp}]
	Restrict to the event that the bound in \eqref{eq:Gnp_effR_upper} holds for all pairs of vertices, and that the degree of each vertex is between $(1+ \xi)pn$ and $(1- \xi) pn$. The former event holds with probability at least $1 - c_k n^{-K}$ by the first part of the Lemma, and by a Chernoff-type bound, the latter event holds with a probability at least
    \begin{equation*}
        1 - 2 \exp\Big( - \frac{\xi^2 p(n-1)}{3}\Big) = 1 - 2 \exp\Big( - C^2 K \log n \frac{(n-1)}{3 n}\Big) = 1 - O(n^{-K}),
    \end{equation*}
    for a large enough $C$. Now using the known lower bound on the effective resistance
	\begin{equation*}
		\effR{(G_{n,p}, 1)}{u}{v} \geq \frac{1}{\deg_{G_{n,p}}(u) + 1} + \frac{1}{\deg_{G_{n,p}}(v) + 1},
	\end{equation*}
    which may be obtained by the Nash-Williams inequality (Lemma~\ref{L:nash_williams}) by possibly changing an edge between $u$ and $v$ into a path of length $2$ with weights equal to $2$, gives for all pairs of (distinct) vertices $u$ and $v$ that
	\begin{equation} \label{eq:effR_concentrate}
		\big| \effR{(G_{n,p}, 1)}{u}{v} - \frac{2}{pn} \big| = O\Big( \frac{ \xi}{pn} \Big)
	\end{equation}
	with probability at least $1 - O(n^{-K})$ for any $K \geq 1$ (here the implicit constant in $O(n^{-K})$ depends on $K$).
	
    Now fix distinct vertices $u,v$ and $x$, and recall that $e = \{u, v\}$ and $f =\{u,x\}$. Denote by $\Tilde{u}$ the contracted vertex corresponding to $u$ and $x$ in $K_n / f$. Notice that as $e$ and $f$ share an endpoint, we may couple a random walk $(Y_i)_{i \geq 0}$ started at $v$ in $(G_{n,p}, 1)$ with a random walk $(\Tilde{Y}_i)_{i \geq 0}$ started at $v$ in $(G_{n,p} /f , 1)$, such that until the first time either $\{u,x\}$ or $\Tilde{u}$ is hit, we have $Y_i = \Tilde{Y}_i$. By the random walk representation of the effective resistance in \eqref{eq:def_eff_R_RW}, we thus have
	\begin{equation*}
		\effR{(G_{n,p} /f , 1)}{\Tilde{u}}{v} = \effR{(G_{n,p}, 1)}{v}{\{u,x\}}.
	\end{equation*}
	Applying Lemma~\ref{L:effR3vert} for the vertices $u,v,x$ with \eqref{eq:effR_concentrate} then gives that for some fixed $C' > 0$
	\begin{equation*}
		\Big| \effR{(G_{n,p}, 1)}{v}{\{u,x\}} - \frac{3}{2 pn} \Big| \leq C'  \frac{ \xi}{pn},
	\end{equation*}
	completing the proof by possibly enlarging the constant $C$.
\end{proof}

The bounds on the effective resistance for $G_{n,p}$ almost give the correct bounds on the effective resistance of $(K_n,\weight)$. Namely, we let $ p = 1/\beta \gg \log n/ n$ and set $\weight'(e) = \exp(-\beta p) \leq \weight(e)$ if $e$ is in $E(G_{n,p})$, which by our coupling is equivalent to $\omega_e \leq p$, and $\weight'(e) = 0$ otherwise. From Definition~\ref{D:effR}, it is clear that multiplying all the weights of the edges by a constant $a > 0$ changes the effective resistances by a multiplicative factor of $1/a$. Thus, Rayleigh's monotonicity principle gives
\begin{equation*}
	\effR{(K_n,\weight)}{u}{v} \leq \effR{(G_{n,p},\weight')}{u}{v} \leq  \frac{(2 + o(1))}{np} e^{\beta p} \approx \frac{2 \cdot e}{n \mu}.
\end{equation*}
We shall improve the constant from $2e$ to $2$ using the following straightforward lemma.
\begin{lemma} \label{L:disjoint_flow}
	Let $\theta_1, \ldots, \theta_m$ be unit flows (cf.\ Section~\ref{SS:electric}) from $u$ to $v$ that are supported on disjoint edges, that is for each edge $e$ we have $\theta_i(e) > 0$ for at most one $i \in \{1, \ldots, m\}$. Then, for any $\alpha_1, \ldots, \alpha_m \geq 0$ we have
	\begin{equation*}
		\Big\Vert \sum_{i=1}^m \alpha_i \theta_i \Big\Vert^2_r = \sum_{i=1}^m \alpha^2_i \Vert\theta_i\Vert^2_r.
	\end{equation*}
\end{lemma}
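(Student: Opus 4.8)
The plan is to expand the squared energy using its bilinear structure and to observe that the disjoint-support hypothesis annihilates every cross term. Set $\theta := \sum_{i=1}^m \alpha_i \theta_i$, which is again an antisymmetric function on the (oriented) edge set. Unwinding the definition of the energy,
\begin{equation*}
    \Big\Vert \sum_{i=1}^m \alpha_i \theta_i \Big\Vert^2_r = \frac{1}{2} \sum_{e \in E} \frac{1}{\weight(e)} \, \theta(e)^2 = \frac{1}{2} \sum_{e \in E} \frac{1}{\weight(e)} \sum_{i,j=1}^m \alpha_i \alpha_j \, \theta_i(e) \theta_j(e),
\end{equation*}
where, following the convention of Section \ref{S:electric}, each physical edge is counted in both orientations. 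Since each $\theta_i$ is antisymmetric, the set of oriented edges on which it is non-zero is a union of both orientations of physical edges; hence the disjointness hypothesis says exactly that for $i \neq j$ and every oriented edge $e$ at least one of $\theta_i(e)$, $\theta_j(e)$ is zero, so every off-diagonal product $\theta_i(e)\theta_j(e)$ vanishes identically. Only the diagonal terms $i = j$ remain, and interchanging the order of summation gives
\begin{equation*}
    \Big\Vert \sum_{i=1}^m \alpha_i \theta_i \Big\Vert^2_r = \sum_{i=1}^m \alpha_i^2 \Big( \frac{1}{2} \sum_{e \in E} \frac{1}{\weight(e)} \, \theta_i(e)^2 \Big) = \sum_{i=1}^m \alpha_i^2 \, \Vert \theta_i \Vert^2_r,
\end{equation*}
which is the assertion.

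There is essentially no obstacle here: once the bilinear expansion is written down, the lemma is a one-line computation. The only point deserving a moment's care is the bookkeeping forced by the two-orientations convention — one should make sure that ``disjoint support'' is understood at the level of physical edges (equivalently, that it is compatible with the antisymmetry $\theta_i(e^-,e^+) = -\theta_i(e^+,e^-)$), so that the off-diagonal products genuinely vanish for both orientations of each edge; the global factor $\tfrac12$ then simply passes through untouched. I would also remark that the proof uses nothing about the $\theta_i$ being \emph{unit} flows or even flows from $u$ to $v$: the $\alpha_i$ need not sum to $1$, and the identity holds for arbitrary antisymmetric functions with pairwise disjoint supports.
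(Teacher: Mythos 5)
Your proof is correct and follows essentially the same route as the paper's: expand the energy bilinearly and observe that disjoint supports kill all cross terms. Your extra remark on reading ``disjoint support'' at the level of physical edges (so that antisymmetry does not spoil the vanishing of off-diagonal products in both orientations) is a worthwhile clarification that the paper's one-line proof leaves implicit.
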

\begin{proof}
	As the support of the flows are disjoint, we have that
	\begin{equation*}
		\Big\Vert \sum_{i=1}^m \alpha_i \theta_i \Big\Vert^2_r = \frac{1}{2}\sum_{e \in E} \Big( \sum_{i=1}^m \alpha_i \theta_i(e) \Big)^2 = \frac{1}{2}  \sum_{e \in E} \sum_{i=1}^m \alpha_i^2 \theta_i(e)^2 = \sum_{i=1}^m \alpha^2_i  \Vert\theta_i \Vert^2_r. \qedhere
	\end{equation*}
\end{proof} 

We are now ready to prove the following lemma.

\begin{lemma} \label{L:effR_lowdisorder_upper}
	Assume $\beta = \beta(n) \ll n/ \log n$ and let $K \geq 1$. Then there exists a constant $c_K > 0$ such that following holds with probability at least $1 - c_K n^{-K}$:
	\begin{enumerate}[label={\roman*})]
		\item for all $u,v \in V$
		\begin{equation}
			\effR{(K_n,\weight)}{u}{v} \leq \frac{2(1 + o_\beta(\sqrt{K}))}{n \mu}; \label{eq:effR_lowdisorder}
		\end{equation}
		
		\item and for all edges $e = \{u,v\}$ and $f =\{u,x\}$ with $|e \cap f| = 1$
		\begin{equation} \label{eq:effR_lowdisorder_contract}
			\effR{(K_n / f,\weight)}{u}{v} \leq \frac{3(1 + o_\beta(\sqrt{K}))}{2n \mu}.
		\end{equation}
	\end{enumerate}
\end{lemma}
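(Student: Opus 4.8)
The plan is to transfer the random-graph bounds of Lemma \ref{L:effGnp} to the weighted graph $(G,\weight)$ via the coupling of Section \ref{SS:coupleRG}, while recovering the sharp constant by refining the crude estimate $\effR{}{u}{v} \leq (2+o(1))e/(n\mu)$ sketched above. The loss of the factor $e$ comes from replacing every retained weight by $\exp(-\beta p) = e^{-1}$ when $p = 1/\beta$; the idea is instead to slice the interval $[0,1]$ into many thin layers $[(k-1)/m, k/m]$ for $k = 1,\dots,m$ with $m = m(n)\to\infty$ slowly, so that inside layer $k$ every edge weight $\exp(-\beta\omega_e)$ is, up to a $(1+o(1))$ factor, equal to $\exp(-\beta k/m)$. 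Each layer $k$ is an independent Erdős–Rényi graph on its own parameter increment; choosing $p_k = c\log n/n$ for a large constant $c$ per layer (so $m \asymp$ a small power of $n$ or just $m \to \infty$ with $mp_k \gg \log n$ still $\ll \sqrt n$) keeps each layer in the regime where Lemma \ref{L:effGnp} applies, giving a unit flow $\theta_k$ from $u$ to $v$ supported on edges of layer $k$ with energy (computed with the true weights) at most $(1+C\epsilon)\exp(\beta k/m)\cdot 2/(p_k n)$.

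Next I would combine these layer flows. Since distinct layers use disjoint edge sets, Lemma \ref{L:disjoint_flow} applies: the flow $\sum_k \alpha_k\theta_k$ with $\sum_k \alpha_k = 1$ has energy $\sum_k \alpha_k^2 \|\theta_k\|_r^2$, which I minimize over the simplex to get $\big(\sum_k \|\theta_k\|_r^{-2}\big)^{-1}$. Plugging in $\|\theta_k\|_r^2 \leq (2(1+C\epsilon)/(p_k n))e^{\beta k/m}$ turns the bound into
\begin{equation*}
    \effR{}{u}{v} \;\leq\; \frac{2(1+C\epsilon)}{n}\left(\sum_{k=1}^m p_k e^{-\beta k/m}\right)^{-1}.
\end{equation*}
The Riemann sum $\sum_k p_k e^{-\beta k/m}$ with $p_k = p_{\mathrm{tot}}/m$ and $p_{\mathrm{tot}} = 1$ (summing the layer increments over the whole interval $[0,1]$) converges to $\int_0^1 e^{-\beta t}\,dt = (1-e^{-\beta})/\beta = \mu$, uniformly in $\beta$ (monotonicity of $t\mapsto e^{-\beta t}$ makes the Riemann-sum error $O(1/m)=o(1)$ relative to $\mu$ once one is careful — this needs checking when $\beta$ itself grows, but the comparison of the sum to the integral is clean because the integrand is monotone). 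That yields $\effR{}{u}{v}\leq 2(1+o(1))/(n\mu)$, i.e.\ \eqref{eq:effR_lowdisorder}. For \eqref{eq:effR_lowdisorder_contract} I run the identical argument using the contracted bound \eqref{eq:effGnp_contract} of Lemma \ref{L:effGnp} in each layer in place of \eqref{eq:Gnp_effR_upper}, which replaces the prefactor $2$ by $3/2$ throughout and gives $\effR{G/f}{u}{v} \leq 3(1+o(1))/(2n\mu)$; the disjointness of the layer flows and Lemma \ref{L:disjoint_flow} are unaffected by the contraction. Finally, a union bound over the $O(n^2)$ pairs $u,v$ (and $O(n^3)$ triples for the contracted case), using that each layer's good event from Lemma \ref{L:effGnp} holds with probability $1 - O(n^{-K'})$ for $K'$ as large as we like and $m$ is at most polynomial (or even subpolynomial) in $n$, gives the claimed $1 - O(n^{-K})$.

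The main obstacle I anticipate is bookkeeping the parameter ranges so that every layer simultaneously satisfies the hypotheses of Lemma \ref{L:effGnp} — namely $\log n \ll p_k n \ll \sqrt n$ and $\sqrt{\log n/(p_k n)} \ll \epsilon < 1/2$ — while still having $m\to\infty$ fast enough that the Riemann-sum approximation to $\mu$ is $1+o(1)$ and slow enough that the union bound survives. When $\beta$ is close to its upper limit $n/\log n$, the per-layer parameter $p_k$ and the number of layers $m$ must be tuned together (one cannot take $p_k$ much smaller than $\log n/n$, which caps $m$), and one must verify that $e^{\beta k/m}$ stays controlled, i.e.\ that the weights within a layer really are comparable; since $\beta/m \cdot 1 \leq \beta \cdot (1/m)$ need not be small, the within-layer weight ratio $e^{\beta/m}$ is the quantity to watch, and keeping it $1+o(1)$ forces $m \gg \beta$, which is compatible with $\beta \ll n/\log n$ only if one also allows $p_k$ as small as (a constant times) $\log n/n$. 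Threading these inequalities is the real content; once the parameters are fixed, the energy computation and the union bound are routine.
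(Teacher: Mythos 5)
Your proposal is correct and follows essentially the same route as the paper: slice the coupling into disjoint weight layers, apply Lemma \ref{L:effGnp} to each layer after lowering its weights via Rayleigh monotonicity, combine the layer flows with (energy-optimal) convex coefficients using Lemma \ref{L:disjoint_flow}, and recognize the resulting sum as a $(1+o(1))$ approximation of $\mu$. The parameter tension you flag at the end is resolved exactly as you suspect, by taking $p_k = \kappa(n)\log n/n$ with $\kappa(n)\to\infty$ slowly enough that $\beta p_k = o(1)$ (possible precisely because $\beta \ll n/\log n$), which keeps every layer inside the hypotheses of Lemma \ref{L:effGnp} while making the relative Riemann-sum error vanish.
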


\begin{proof}
	We construct a random graph $G_n[p_0, p_1)$ by letting $G_n[p_0, p_1)$ be the subgraph of $K_n$ consisting of edges satisfying
	\begin{equation*}
		e \in G_n[p_0, p_1) \iff \omega_e \in [p_0, p_1)
	\end{equation*}
	for $0 \leq p_0 \leq p_1 \leq 1$. Define $p = \kappa(n) \log n /n$ for some function $\kappa = \kappa(n) \rightarrow \infty$ such that $\beta = o(p^{-1})$. Let $m := \lfloor p^{-1} \rfloor$, and consider the sequence of snapshots 
	\begin{equation*}
		p_j = j \cdot p \qquad j=1, \ldots, m.
	\end{equation*}
	We will construct a flow on $K_n$ by considering a weighted sum of disjoint flows on $G_n[p_{j-1}, p_{j})$. Namely, for each $j$ let $\theta_j$ be the current unit flow (the unique flow minimizing the energy in Definition~\ref{D:effR}) on $G_n[p_{j-1}, p_{j})$, and let
	\begin{align*}
		\Tilde{\alpha}_j = e^{-\beta \cdot p_{j}} = e^{-j \beta p}, \qquad j&=1,\ldots, m, \\
		\alpha_j = \frac{\Tilde{\alpha}_j }{\sum_{i=1}^{m} \Tilde{\alpha}_i } = \Tilde{\alpha}_j \frac{e^{\beta p } - 1}{1 - e^{-\beta m p}}, \qquad j&=1, \ldots, m.
	\end{align*}
	As $\sum_{i=j}^m \alpha_j = 1$, the flow defined by
	\begin{equation*}
		\theta := \sum_{j=1}^m \alpha_j \theta_j
	\end{equation*}
	is a unit flow from $u$ to $v$ where each $\theta_j$ is supported on different edges. Lemma~\ref{L:disjoint_flow} then gives that
	\begin{equation} \label{eq:effR_sum_flows}
		\effR{(K_n,\weight)}{u}{v} \leq  \Vert \theta \Vert^2_r = \sum_{j=1}^m \alpha^2_j  \Vert\theta_j \Vert^2_r.
	\end{equation}
	
	By Rayleigh's monotonicity principle, the effective resistance on $G_n[p_{j-1}, p_{j})$ is upper bounded by the effective resistance on the same graph where each weight is decreased to $\exp( - j \beta p)$. Thus by inequality \eqref{eq:Gnp_effR_upper} of Lemma~\ref{L:effGnp} (with $\xi = C \sqrt{K+1} \kappa^{-1/2} = o_\beta(\sqrt{K})$) we have
	\begin{equation*}
		\Vert\theta_j \Vert^2_r \leq \frac{2(1 +o_\beta(\sqrt{K}))}{pn} e^{j \beta p}.
	\end{equation*}
	for all $j$ with probability at least $1 - O(n^{-K})$. Inserting this bound into \eqref{eq:effR_sum_flows} for each $j$ gives that
	\begin{align}
		\effR{(K_n,\weight)}{u}{v} &\leq \frac{2(1 +o_\beta(\sqrt{K}))}{pn} \sum_{j=1}^m \frac{\Tilde{\alpha}_j^2 }{\big( \sum_{j=1}^m \Tilde{\alpha}_j \big)^2 } e^{j \beta p} \nonumber \\
		&= \frac{2(1 +o_\beta(\sqrt{K}))}{pn} \Big( \frac{e^{\beta p } - 1}{1 - e^{-\beta m p}} \Big) \nonumber \\
        &\leq \frac{2(1 +o_\beta(\sqrt{K}))}{pn} \frac{1}{1 - p} \Big( \frac{e^{\beta p } - 1}{1 - e^{-\beta}} \Big), \label{eq:before_Taylor}
	\end{align}
    where the last line uses $mp \geq 1 - p$ and that $x \mapsto 1 - e^{-x}$ is concave, so that
    \begin{equation*}
        1 - e^{-\beta mp} \geq 1 - e^{-\beta(1-p)} \geq (1-p)(1-e^{-\beta})
    \end{equation*}
    whenever $p \leq 1$. As $\beta p = o_\beta(1)$, a Taylor expansion gives $e^{\beta p} - 1 = \beta p + O(\beta^2 p^2)$, which applied to \eqref{eq:before_Taylor} yields, after some algebraic simplifications, the inequality \eqref{eq:effR_lowdisorder}. The proof of \eqref{eq:effR_lowdisorder_contract} follows along the same lines by using inequality \eqref{eq:effGnp_contract} in Lemma~\ref{L:effGnp} instead of the inequality in \eqref{eq:Gnp_effR_upper}.
\end{proof}

We may now easily prove Theorem~\ref{T:Overlap} and Theorem~\ref{T:length} for the low disorder regime of $\beta \ll n/\log n$. 

\begin{proof}[Proof of Theorem~\ref{T:Overlap}~\ref{T:Overlap_low}]
	By equation \eqref{eq:OV_effR}, Lemma~\ref{L:effR_lowdisorder_lower} and Lemma~\ref{L:effR_lowdisorder_upper} with some large but fixed $K$ (so that $o_\beta(\sqrt{K}) = o_\beta(1)$), we have that for $\beta \ll n/\log n$
	\begin{equation*}
		\mathcal{O}(\beta) = (1 + o_\beta(1)) \frac{4}{n^2 \mu^2} \sum_{e \in E} \weight(e)^2 
	\end{equation*}
	with high probability. Using the concentration bound of \eqref{eq:BernU} for the sum with $\beta' = 2 \beta$, $m = |E_n| = n(n-1)/2$ and, say, $\delta = n^{-1/10}$ gives
    \begin{equation*}
        \P\Big( \big| \sum_{e \in E} \weight(e)^2 - \frac{m(1-e^{\beta'})}{\beta'} \big| \geq \delta m \frac{1 - e^{\beta'}}{\beta'} \Big) \leq 2 \exp \big( - \frac{\delta^2 n^2}{9 \beta'} \big) \leq 2 \exp(-\sqrt{n}),
    \end{equation*}
    where the last inequality holds when $n$ is large enough. Therefore, using that $\delta = o(1)$ gives with high probability 
    \begin{equation*}
        \mathcal{O}(\beta) = (1 + o_\beta(1)) \frac{4}{n^2 \mu^2} \frac{m(1-e^{2\beta})}{2\beta} =  (1 + o_\beta(1)) \beta \frac{(1-e^{-2 \beta})}{(1-e^{-\beta})^2},
    \end{equation*}
    which finishes the proof. 
\end{proof}

\begin{proof}[Proof of Theorem~\ref{T:length}~\ref{T:length_low}]
    Using Kirchhoff's formula (Theorem \ref{T:Kirchhoff}), the expected (w.r.t.\ $\bE$) edge length satisfies
	\begin{equation*}
		\bE \big[ L(\cT) \big] = \sum_{\{u,v\} \in E} \omega_{\{u,v\}} \bP\big( \{u,v\} \in \cT \big) =  \sum_{\{u,v\} \in E} \omega_{\{u,v\}} e^{-\beta \omega_{\{u,v\}}} \effR{(K_n,\weight)}{u}{v}.
	\end{equation*}
	Therefore, by Lemma~\ref{L:effR_lowdisorder_lower} and Lemma~\ref{L:effR_lowdisorder_upper} with some large but fixed $K$, we have 
	\begin{equation}
		\bE \big[ L(\cT) \big] = (1+o_\beta(1)) \frac{2}{n \mu} \sum_{e \in E} \omega_{e} e^{-\beta \omega_e} \label{eq:low_length_concentrate}
	\end{equation}
	with probability, say, at least $1 - O(n^{-2})$. Let $X_{e} = \omega_e e^{-\beta \omega_e}$, then 
	\begin{equation*}
		\E[X_e] = \frac{1 - \beta e^{-\beta} - e^{-\beta}}{\beta^2},
	\end{equation*}
	and there exists a constant $C > 0$ such for $\beta \geq 1$, we have 
	\begin{align*}
		{\rm  Var} (X_e) \leq C \frac{1}{\beta^3}.
	\end{align*}
	As \eqref{eq:low_length_concentrate} holds with probability at least $1 - O(n^{-2})$ and the length of $\cT$ is trivially between $0$ and $n-1$, the equality in \eqref{eq:length_low} holds in $\P$-expectation. Furthermore, using Bernstein's inequality similarly to \eqref{eq:BernU}, with $m = n(n-1)/2$ and $m/\beta^2 \rightarrow \infty$, one may show that the right-hand side of \eqref{eq:low_length_concentrate} is, with high probability, as in \eqref{eq:length_low}. 
\end{proof}


\subsection{Transfer-impedance}

Recall the definition of the transfer-impedance matrix from Section~\ref{SS:electric} and that $|e \cap f|$ was defined to be the number of endpoints that edges $e$ and $f$ share. We denote by $P^0(\cdot)$ and $Y^0$ the unweighted UST measure and transfer-impedance matrix on the complete graph $(K_n, 1)$, respectively. We shall make use of the following lemma.

\begin{lemma} \label{L:Y^0P^0}
    On the unweighted complete graph $(K_n, 1)$ we have that
    \begin{enumerate}[label={\roman*})]
        \item \label{enu:Y^0} the transfer-impedance matrix satisfies
        \begin{equation*}
	| Y^0(e,f)| =
	\begin{cases}
		2/n & \text{if } |e \cap f| = 2, \\
		1/n & \text{if } |e \cap f| = 1, \\
		0 & \text{if } |e \cap f| = 0,
	\end{cases}
        \end{equation*}

        \item \label{enu:P^0} and for distinct edges $\{e_1, \ldots, e_k\}$ not containing a cycle
        \begin{equation} 
		\bP^0(e_1, \ldots, e_k \in \cT) \geq \frac{1}{k! n^k}. \label{eq:k!_UST_bound}
	\end{equation}
        
    \end{enumerate}
\end{lemma}%
\begin{proof}[Proof sketch]
    Note that by symmetry, for any edge $e \in E$ one has $P^0(e \in \cT) = 2/n$ so that by Kirchhoff's formula the effective resistance between two endpoints of an edge is $2/n$. Item \ref{enu:Y^0} then follows by observing that the unique harmonic function $v_e(\cdot)$ with $v_e(e^-) = 2/n$ and $v_e(e^+) = 0$ is given by $v_e(x) = 1/n$ for $x \not\in \{e^-, e^+\}$.

    For \ref{enu:P^0}, we first note that on the contracted graph $G / \{e_1, \ldots, e_j\}$ we have by the Aldous-Broder algorithm (see e.g.\ \cite{Ald90} or \cite{Bro89}) that
    \begin{equation*}
        \bP^0( e_{j} \in \cT \mid e_1, \ldots, e_{j-1} \in \cT) \geq \frac{1}{j n}. 
    \end{equation*}
    Indeed, on $G / \{e_1, \ldots, e_{j-1}\}$ any vertex has at most $jn$ many neighbors, so that with probability at least $1/jn$ the random walk started at an endpoint of $e_{j}$ immediately uses $e_{j}$, which by the algorithm implies that $e_{j}$ is in the tree. By splitting into conditional probabilities, we get that
    \begin{equation*} 
		\bP^0(e_1, \ldots, e_k \in \cT) = \prod_{j=1}^k \bP^0( e_{j} \in \cT \mid e_1, \ldots, e_{j-1} \in \cT) \geq \frac{1}{k! n^k}. \qedhere
	\end{equation*}
\end{proof}

In the following lemma, we give sharp bounds on the absolute values of the entries of the transfer-impedance matrix in the weighted case.

\begin{lemma} \label{L:Y(e,f)}
	If $\beta = \beta(n) \ll n/\log n$, then for any $K \geq 1$ and all edges $e,f$
	\begin{equation} \label{eq:Y(e,f)_abs}
		|Y(e,f)| = \frac{\weight(f)}{n \mu} \big( |e \cap f| + o_\beta(\sqrt{K}) \big).
	\end{equation}
	with probability at least $1 - c_K n^{-K}$ for some constant $c_K > 0$.  In particular,
	\begin{equation} \label{eq:Y(e,f)-Y^0(e,f)}
		\Big| Y(e,f) - \frac{\weight(f)}{\mu} Y^0(e,f) \Big| = o_\beta(\sqrt{K}) \cdot \frac{\weight(f)}{n\mu}.
	\end{equation}
	with probability at least $1 - c_K n^{-K}$.
\end{lemma}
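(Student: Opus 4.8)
The plan is to derive \eqref{eq:Y(e,f)_abs} from formula \eqref{eq:Yef_as_UST}, which expresses $|Y(e,f)|$ in terms of one- and two-edge inclusion probabilities, and then to feed in the sharp effective-resistance estimates of Lemmas \ref{L:effR_lowdisorder_lower} and \ref{L:effR_lowdisorder_upper}. First I would treat the three cases according to $|e\cap f|$. For $|e\cap f|=2$ we simply have $e=f$ (or $f$ reversed), so $Y(e,e)=\bP(e\in\cT)=\weight(e)\effR{}{e^-}{e^+}$, and the two lemmas give $\effR{}{e^-}{e^+}=\tfrac{2(1+o(1))}{n\mu}$, matching $\tfrac{\weight(f)}{n\mu}(2+o(1))$. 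The substantive cases are $|e\cap f|=1$ and $|e\cap f|=0$, where I must estimate $\bP(e\in\cT)$, $\bP(f\in\cT)$, and $\bP(e,f\in\cT)$, the last via Kirchhoff on a contracted graph.

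The key identity I would use for the joint inclusion probability is the standard formula $\bP(e,f\in\cT)=\bP(f\in\cT)\,\bP(e\in\cT \mid f\in\cT)=\weight(f)\effR{}{f^-}{f^+}\cdot\weight(e)\effR{G/f}{e^-}{e^+}$, i.e.\ conditioning on $f$ being in the tree is the same as contracting $f$ (see \cite[Section 4.2]{LP16}). Then from \eqref{eq:Yef_as_UST},
\begin{equation*}
    Y(e,f)^2 = \frac{\weight(f)}{\weight(e)}\Big(\bP(e\in\cT)\bP(f\in\cT) - \bP(e,f\in\cT)\Big) = \weight(f)^2\,\effR{}{f^-}{f^+}\,\Big(\weight(e)\effR{}{e^-}{e^+} - \weight(e)\effR{G/f}{e^-}{e^+}\Big).
\end{equation*}
So I need the difference $\effR{}{e^-}{e^+} - \effR{G/f}{e^-}{e^+}$. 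For $|e\cap f|=1$, say $e=(u,v)$, $f=(u,x)$: the upper bounds give $\effR{}{u}{v}\le\tfrac{2(1+o(1))}{n\mu}$ and the lower bounds give $\effR{G/f}{u}{v}\ge\tfrac{3(1-o(1))}{2n\mu}$, so the difference is at most $\tfrac{(1/2+o(1))}{n\mu}$; combined with $\effR{}{f^-}{f^+}=\tfrac{2(1+o(1))}{n\mu}$ and $\weight(e)\le 1$ this yields $Y(e,f)^2 \le \weight(f)^2\cdot\tfrac{2}{n\mu}\cdot\tfrac{1}{n\mu}(1+o(1))$, i.e.\ $|Y(e,f)|\le\tfrac{\weight(f)}{n\mu}(1+o(1))$. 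For the matching lower bound I would use $\effR{}{u}{v}\ge\tfrac{2(1-o(1))}{n\mu}$ together with an upper bound $\effR{G/f}{u}{v}\le\tfrac{3(1+o(1))}{2n\mu}$ from \eqref{eq:effR_lowdisorder_contract}, plus $\weight(e)=e^{-\beta\omega_e}$ which is $\ge$ some quantity I can control — here a subtlety arises, since $\weight(e)$ can be exponentially small and the factor $\weight(f)/\weight(e)$ in \eqref{eq:Yef_as_UST} could blow up; however the product $\weight(e)\cdot(\text{resistance difference})$ is what enters, so I would keep everything in the form $\weight(e)\effR{}{e^-}{e^+}=\bP(e\in\cT)$ and never divide by $\weight(e)$ in isolation. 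For $|e\cap f|=0$, contracting $f$ changes $\effR{}{u}{v}$ only negligibly: \eqref{eq:effR_ecapf=0_low} gives $\effR{G/f}{u}{v}\ge\tfrac{2(1-o(1))}{n\mu}$ while $\effR{}{u}{v}\le\tfrac{2(1+o(1))}{n\mu}$, so the difference is $o(1/(n\mu))$ and hence $|Y(e,f)|=o(\weight(f)/(n\mu))$, as claimed.

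Finally, \eqref{eq:Y(e,f)-Y^0(e,f)} is a direct restatement: since $|Y^0(e,f)|=|e\cap f|/n$, we have $\tfrac{\weight(f)}{\mu}Y^0(e,f) = \pm\tfrac{\weight(f)}{n\mu}|e\cap f|$, and the sign of $Y(e,f)$ matches that of $Y^0(e,f)$ (both are determined by the combinatorics of the fixed edge orientations, not by the weights — the unit current flow from $e^-$ to $e^+$ pushes positive flow through $f$ in the same direction regardless of the weights, at least in the relevant $|e\cap f|\in\{1,2\}$ cases, and for $|e\cap f|=0$ the estimate holds for either sign). Thus \eqref{eq:Y(e,f)_abs} gives \eqref{eq:Y(e,f)-Y^0(e,f)} upon subtracting. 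I would close by taking a union bound over all $O(n^4)$ pairs $(e,f)$, which costs only a polynomial factor and is absorbed by choosing the constant $K$ in the earlier lemmas large enough.

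The main obstacle I anticipate is bookkeeping the $o(1)$ error terms through the subtraction of two effective resistances that are each $\Theta(1/(n\mu))$: the difference $\effR{}{u}{v}-\effR{G/f}{u}{v}$ in the $|e\cap f|=1$ case is itself $\Theta(1/(n\mu))$, so I need the relative errors in both terms to be genuinely $o(1)$ and to not rely on cancellation of the leading constants — which is exactly why the lemmas were stated with the sharp constants $2$ and $3/2$ rather than just matching orders. A secondary point requiring care is making sure the contracted-graph resistance bounds of \eqref{eq:effG_contract}/\eqref{eq:effR_lowdisorder_contract} are available for \emph{every} pair $e,f$ with $|e\cap f|=1$ simultaneously (not just a fixed pair), but the lemmas are already stated in that uniform-over-all-edges form, so this is just a matter of quoting them correctly.
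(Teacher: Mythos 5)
Your proposal follows the paper's proof essentially verbatim: the same identity $Y(e,f)^2 = \weight(f)^2\,\effR{}{f^-}{f^+}\big(\effR{}{e^-}{e^+} - \effR{G/f}{e^-}{e^+}\big)$ obtained by interpreting $\bP(e,f\in\cT)$ via contraction, the same case split on $|e\cap f|$ fed by Lemmas \ref{L:effR_lowdisorder_lower} and \ref{L:effR_lowdisorder_upper} (with Rayleigh monotonicity handling $|e\cap f|=0$), the same sign argument for \eqref{eq:Y(e,f)-Y^0(e,f)}, and the same union bound. One correction: the ``subtlety'' you raise about $\weight(e)$ being exponentially small is a phantom created by an algebra slip in your displayed equation --- the factor $\weight(e)$ coming from $\bP(e\in\cT)=\weight(e)\effR{}{e^-}{e^+}$ cancels exactly against the $1/\weight(e)$ in \eqref{eq:Yef_as_UST}, so the correct identity contains no $\weight(e)$ at all and both the upper and the lower bound go through with no extra care.
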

\begin{proof}
	Recall from Section~\ref{SS:electric} that
	\begin{equation*}
		Y(e,e) = \bP(e \in \cT) = \weight(e) \effR{(K_n,\weight)}{e^-}{e^+},
	\end{equation*}
	and, by equation \eqref{eq:Yef_as_UST}, for $e,f$ with $|e \cap f| \leq 1$
	\begin{align}
		Y(e,f)^2 &= \frac{\weight(f)}{\weight(e)} \big( \bP(e \in \cT) \bP(f \in \cT) - \bP(e,f \in \cT) \big) \nonumber \\
		&= \weight(f)^2 \effR{(K_n,\weight)}{f^-}{f^+} \Big( \effR{(K_n,\weight)}{e^-}{e^+}  - \effR{(K_n / f,\weight)}{e^-}{e^+} \Big). \label{eq:Y(e,f)^2}
	\end{align}

	Rayleigh's monotonicity principle shows that for edges with $|e \cap f| = 0$ we have
	\begin{equation*}
		\effR{(K_n / f,\weight)}{e^-}{e^+} \leq \effR{(K_n,\weight)}{e^-}{e^+},
	\end{equation*}
	so that the bounds from Lemma~\ref{L:effR_lowdisorder_lower} and Lemma~\ref{L:effR_lowdisorder_upper} give for $|e \cap f| \leq 1$
	\begin{align*}
		\effR{(K_n,\weight)}{e^-}{e^+} &= (1 + o_\beta(\sqrt{K})) \frac{2}{ n \mu}, \\
		\effR{(K_n / f,\weight)}{e^-}{e^+} &= (1 + o_\beta(\sqrt{K})) \Big( \frac{4 - |e \cap f|}{2 n \mu} \Big),
	\end{align*}
	with probability at least $1 - O(n^{-K})$. Inserting this back into \eqref{eq:Y(e,f)^2} readily gives \eqref{eq:Y(e,f)_abs}.
	
	For equation \eqref{eq:Y(e,f)-Y^0(e,f)}, notice that by the observation in Remark~\ref{R:flow}, the signs of $Y(e,f)$ and $Y^0(e,f)$ agree as long as $|e \cap f| \geq 1$. 
    Hence, in view of Lemma~\ref{L:Y^0P^0}~\ref{enu:Y^0}, the equality of \eqref{eq:Y(e,f)_abs} implies the equality of \eqref{eq:Y(e,f)-Y^0(e,f)}. 
\end{proof}

Lemma~\ref{L:Y(e,f)} has the following corollary.  
\begin{corollary} \label{C:ek_in_T}
	Fix $k \geq 1$ and let $e_1, \ldots, e_k \in E$ be distinct edges. If $\beta = \beta(n) \ll n/\log n$, then for any $K \geq 1$
	\begin{equation*}
		\bP(e_1, \ldots, e_k \in \cT) = \Big(1 +  k (k!)^2 o_\beta \big( \sqrt{K} \big) \Big) \prod_{i=1}^k \frac{\weight(e_i)}{\mu} \bP^0(e_1, \ldots, e_k \in \cT) \label{eq:ek_in_T}
	\end{equation*}
	with probability at least $1 - c_K n^{-K}$ for some constant $c_K > 0$.
\end{corollary}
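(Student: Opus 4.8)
The plan is to use the transfer-impedance theorem (Theorem \ref{T:transfer-imp}) to express $\bP(e_1, \ldots, e_k \in \cT)$ as the determinant of the $k \times k$ matrix $\big(Y(e_i, e_j)\big)_{1 \le i,j \le k}$, and likewise $\bP^0(e_1, \ldots, e_k \in \cT) = \det\big(Y^0(e_i,e_j)\big)$, and then compare the two determinants entrywise using Lemma \ref{L:Y(e,f)}. The key observation is equation \eqref{eq:Y(e,f)-Y^0(e,f)}: up to the explicit rescaling, $Y(e_i,e_j)$ is within $o(1) \cdot \weight(e_j)/(n\mu)$ of $(\weight(e_j)/\mu)\,Y^0(e_i,e_j)$. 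I would first factor the diagonal scaling out of the determinant: writing $D = \operatorname{diag}\big(\weight(e_1)/\mu, \ldots, \weight(e_k)/\mu\big)$, we have
\begin{equation*}
    \det\big( Y(e_i,e_j) \big) = \Big( \prod_{i=1}^k \frac{\weight(e_i)}{\mu} \Big) \det\big( D^{-1} Y(e_i,e_j) \big),
\end{equation*}
since $Y^0(e_i,e_j)$ carries its $\weight$-dependence only through its column index. So it suffices to show $\det\big(D^{-1} Y(e_i,e_j)\big) = (1+o(1)) \det\big(Y^0(e_i,e_j)\big)$ with the stated probability, and by \eqref{eq:Y(e,f)-Y^0(e,f)} the matrix $A := n\mu\, D^{-1} Y(e_i,e_j)$ satisfies $\|A - n\mu\, Y^0(e_i,e_j)\|_{\max} = o(1)$, i.e.\ $A = n\mu\, Y^0 + E$ with $E$ having entries of size $o(1)$, where $n\mu\, Y^0$ has all entries in $\{0, 1, 2\}$.

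The remaining point is that $\det(n\mu\, Y^0 + E) = (1+o(1))\det(n\mu\, Y^0)$. This requires a lower bound on $|\det(n\mu\, Y^0(e_i,e_j))| = (n\mu)^k \bP^0(e_1,\ldots,e_k \in \cT)$: as long as this quantity is bounded below by a constant (depending only on $k$, not $n$), the multilinear expansion of the determinant shows the perturbation $E$ changes it only by a $(1+o(1))$ factor, since each of the finitely many terms picking up at least one entry of $E$ contributes $o(1)$. So I would invoke the fact that $n\mu\, Y^0$ is, up to the $1/n$ scaling absorbed, the integer matrix with entries $2$ on the diagonal, $\pm 1$ for adjacent edges, $0$ otherwise — and more to the point, that $\bP^0(e_1,\ldots,e_k \in \cT)$ for $k$ fixed distinct edges on $K_n$ converges (as $n \to \infty$) to a strictly positive limit, namely the probability that $e_1,\ldots,e_k$ all lie in the local (Poisson branching process) limit of the UST; equivalently $(n\mu)^k \bP^0(\cdots)$ converges to $\det$ of the limiting integer matrix $Y^\infty$ with $Y^\infty(e_i,e_i) = 2$, $Y^\infty(e_i,e_j) = \pm 1$ if $|e_i \cap e_j| = 1$ and $0$ otherwise, which is a fixed nonzero integer (a spanning-forest count on the $k$-edge configuration). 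Thus $|\det(n\mu Y^0)|$ is bounded away from $0$ uniformly in $n$ for $n$ large, and the perturbation argument closes.

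The main obstacle is precisely this last quantitative lower bound on $|\det(Y^0(e_i,e_j))|$ uniformly in $n$: one must rule out degenerate near-cancellations in the unweighted $k \times k$ transfer-impedance determinant. For $k$ fixed and $n \to \infty$ the entries $n\, Y^0(e_i,e_j)$ stabilize to the integer values above (this is exactly the content of the $\beta = 0$ case of Lemma \ref{L:effR_lowdisorder_lower}/\ref{L:effR_lowdisorder_upper}, or classically known), and the limiting determinant counts spanning trees of $K_n$ containing $e_1,\ldots,e_k$ divided by the total, which is nonzero whenever $\{e_1,\ldots,e_k\}$ is a forest and is still a well-defined nonzero rational even when it contains a cycle (in which case $\bP^0 = 0$ and $\bP = 0$ too, so the corollary is the trivial $0 = 0$); I would handle the forest and cycle cases separately, noting that the $o(1)$-perturbation cannot create a spurious nonzero value from an exact zero because $\weight(e)/\mu \le 1/\mu = O(1)$ keeps $D^{-1}$ bounded and the determinant is continuous. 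All of these are routine once the structure above is in place, so I would present the proof in the order: (1) apply Theorem \ref{T:transfer-imp} to both measures; (2) factor out $D$; (3) quote \eqref{eq:Y(e,f)-Y^0(e,f)} to get the entrywise perturbation; (4) observe $|\det(n\mu Y^0)|$ is bounded below (citing the UST local-limit / direct computation) when the edges form a forest, and is $0$ with $\bP = \bP^0 = 0$ otherwise; (5) conclude by the multilinearity perturbation estimate.
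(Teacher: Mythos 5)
Your proposal is correct and follows essentially the same route as the paper: apply the transfer-impedance theorem, factor the column weights $\weight(e_j)/\mu$ out of the determinant, feed in the entrywise estimate \eqref{eq:Y(e,f)-Y^0(e,f)} from Lemma \ref{L:Y(e,f)}, and close with a multilinear perturbation bound that needs a lower bound on the unweighted determinant $\bP^0(e_1,\ldots,e_k\in\cT)$. The only (immaterial) divergence is in that last ingredient: the paper uses the crude elementary bound $\bP^0(e_1,\ldots,e_k\in\cT)\geq 1/(k!\,n^k)$, which already dominates the $o(1)\cdot k!\,k\,n^{-k}$ perturbation error, whereas you invoke the stronger fact that $n^k\,\bP^0$ converges to a positive constant (a forest/component-size count) — both are valid, yours just imports slightly more than is needed.
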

\begin{proof}
	If $e_1, \ldots e_k$ contains a cycle, then both sides of the equation are $0$ and the equality holds deterministically. Hence, assume that $e_1, \ldots, e_k$ contains no cycles. By the transfer-impedance theorem (Theorem~\ref{T:transfer-imp}) and Lemma~\ref{L:Y(e,f)}, we have with probability $1 - O(n^{-K})$ that
	\begin{align*}
		\bP(e_1, \ldots, e_k \in \cT) &= \sum_{\sigma \in S_k} \Big( sgn(\sigma) \prod_{i=1}^k Y(e_i, e_{\sigma(i)})\Big) \\
		&= \sum_{\sigma \in S_k} \bigg[ sgn(\sigma)  \prod_{i=1}^k \frac{\weight(e_{i})}{\mu}\prod_{i=1}^k \Big(Y^0(e_i, e_{\sigma(i)}) + \frac{o_\beta(\sqrt{K})}{n} \Big) \bigg] \\
		&= \prod_{i=1}^k \frac{\weight(e_{i})}{\mu} \bigg[ \Big( \sum_{\sigma \in S_k}  sgn(\sigma) \prod_{i=1}^k Y^0(e_i, e_{\sigma(i)})  \Big) + k! \frac{k \cdot o_\beta(\sqrt{K})}{n^k}  \bigg] \\
		&= (1 + k (k!)^2 o_\beta(\sqrt{K})) \prod_{i=1}^k \frac{\weight(e_i)}{\mu} \bP^0(e_1, \ldots, e_k \in \cT),
	\end{align*}
	where using \eqref{eq:k!_UST_bound} from Lemma~\ref{L:Y^0P^0}~\ref{enu:P^0} we can absorb the $k! k n^{-k} o_\beta(\sqrt{K})$ error term into $k (k!)^2 o_\beta(\sqrt{K})$ times the probability of the edges being in the unweighted UST.
\end{proof}

\noindent We remark that in view of \eqref{eq:k!_UST_bound}, to make Corollary~\ref{C:ek_in_T} useful we must pick $K > k$.


\subsection{Proof of Theorem~\ref{T:local_limit} for small \texorpdfstring{$\beta$}{beta} via tree moments}

For random variables $X_n$ taking values in $\R$, the method of moments says that if all moments of $X_n$ converge to the moments of $X$ and these moments do not grow too fast, then $X_n$ converges in distribution to $X$. In \cite{BP93}, the authors have generalized this result to $t$\textsuperscript{th} tree moments and local convergence of trees.
More precisely, for a finite rooted tree $(t,\rho)$ and a rooted graph $(G,o)$, a map $f$ from $V(t)$ to $V(G)$ is called a \textit{tree-map} if $f$ is injective, maps $\rho$ to $o$, and if $u \sim v$ in $t$ then $f(u) \sim f(v)$ in $G$. Denote by $N(G,o;t,\rho)$ the number of distinct tree-maps between $(t,\rho)$ and $(G,o)$. 
Suppose $(\cT_n, o_n)$ for $n \geq 1$ and $(\cT_\infty, o_\infty)$ are random rooted trees, each defined on some
probability space. In what follows, we use the symbol $\P$ for the underlying probability measure of any of those probability spaces.
The following is an analog of the method of moments for finite trees.

\begin{proposition}[Proposition~8.7 in \cite{BP93}] \label{P:tree_moment}
	Suppose that $\E [N(\cT_n,o_n; t,\rho)] \rightarrow \E [N(\cT_\infty, o_\infty; t,\rho)] < \infty$ for all finite rooted trees $(t,\rho)$ and that the law of $(\cT_\infty, o_\infty)$ is uniquely determined by the values of $\E [N(\cT_\infty, o_\infty; t,\rho)]$. Then $(\cT_n, o_n)$ converges locally to $(\cT_\infty, o_\infty)$.
\end{proposition}

One such tree that is uniquely determined by its $t$\textsuperscript{th} moments is $(\mathcal{P},o)$ the Poisson(1) tree conditioned to survive forever (see Section~8 of \cite{BP93}). Consequently, to prove the local convergence of $\cT$ in the case $\beta \ll n/\log n$, it suffices to prove convergence of the expectations of $N(\cT, 1; t,\rho)$.

\begin{proof}[Proof of Theorem~\ref{T:local_limit}~\ref{T:local_limit_low}]
	Similarly to the proof of Theorem~5.3 in \cite{BP93}, for any finite rooted tree $(t,\rho)$ and tree map $f$, we denote by $\textrm{Im}_E(f) \subseteq E(K_n)$ the edges in the image of $f$.
    That is, an edge $e = \{u, v\} \in E(K_n)$ is in $\textrm{Im}_E(f)$ if there exists adjacent vertices $x,y$ in $t$ such that $f(x) = u$ and $f(y) = v$. Let $K > |t|-1$ be fixed (so that $|t|(|t|!)^2 \cdot o_\beta(\sqrt{K}) = o_\beta(1)$), and write $\textrm{Im}_E(f) = \{e_1, \ldots, e_{|t| -1} \}$ for some edges $e_1, \ldots, e_{|t| -1} \in E(K_n)$. Corollary~\ref{C:ek_in_T} gives that
	\begin{align*}
		\E\big[ \bP( \textrm{Im}_E(f) \subseteq \cT) \big] &= \big(1 + o_\beta(1) \big) \E\Big[ \prod_{i=1}^{|t|-1} \frac{\weight(e_i)}{\mu} \Big] \bP^0 \big( \textrm{Im}_E(f) \subseteq \cT \big) + O(n^{-K}) \\
        &= \big(1 + o_\beta(1) \big) \bP^0 \big( \textrm{Im}_E(f) \subseteq \cT \big),
	\end{align*}
    where, as in \eqref{eq:k!_UST_bound}, we have used that the probability that any $|t|-1$ edges are in the (unweighted) UST is bounded below by $1/(|t|-1)! (|t|-1)^k$ so that we can absorb the $O(n^{-K})$ factor.
	Hence, summing over all tree maps $f$
	\begin{equation*}
		\E \Big[ \bE[ N(\cT,1; t,\rho)] \Big] = \E \Big[ \sum_f \bP \big( \textrm{Im}_E(f) \subseteq \cT \big)  \Big] = \big(1+ o_\beta(1) \big) \sum_f \bP^0 \big( \textrm{Im}_E(f) \subseteq \cT \big),
	\end{equation*}
	which converges to $\E[ N(\mathcal{P},o; t,\rho)]$ by Theorem~5.3 of \cite{BP93}. Applying Proposition~\ref{P:tree_moment} then ensures that we have the local convergence of $\cT$ to the Poisson(1) tree conditioned on survival, which completes the proof.
\end{proof}


\section{High disorder} \label{S:high}

In this section, we will first prove Theorem~\ref{T:Overlap} and Theorem~\ref{T:length} for $\beta \gg n (\log n)^2$. Then we will adapt these techniques to show the convergence of the local limit. The key argument is that, with high probability, edges in the MST with weights outside a window of width $2 \epsilon/n$ (for some $\epsilon = o(1)$ depending on $\beta$) must also appear in the RSTRE. We remark that here high disorder refers to $\beta \gg n$ and not to the high disorder regime $\beta \gg n^{4/3}$ as in \cite{MSS24}.

\subsection{Overlap} \label{SS:overlap_high}

The edge overlap result of Theorem~\ref{T:Overlap}~\ref{T:Overlap_high}, corresponding to large $\beta$, is restated in the next lemma.
\begin{lemma} \label{L:high_overlap}
	Let $\beta = \beta(n) \gg n (\log n)^2$, then with high probability
	\begin{equation} \label{eq:high_overlap}
		\mathcal{O}(\beta) = (1 - o_\beta(1)) n.
	\end{equation}
\end{lemma}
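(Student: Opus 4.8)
The plan is to use the identity $\mathcal{O}(\beta) = \sum_{(u,v)\in E} \bP\big((u,v)\in\cT\big)^2$ from \eqref{eq:OV_effR} and show that, with high probability over the environment, the probabilities $\bP\big((u,v)\in\cT\big)$ are sharply concentrated near $1$ for the $n-1$ edges of the MST and are $o(1)$ (indeed summably small) for all other edges. More precisely, set $\epsilon = \epsilon(n) = o(1)$ with $n(\log n)^2 \ll \beta$, to be chosen so that $\beta \epsilon/n \to \infty$ but $\epsilon$ still $o(1)$; for instance one can take $\epsilon$ comparable to $n\log n/\beta$ times a slowly growing factor. The heuristic is that an edge $e$ with $\omega_e$ below the ``critical window'' $[p_c - \epsilon/n, p_c + \epsilon/n]$ around $p_c = 1/n$ sits inside a connected component of $G_{n,p_c - \epsilon/n}$, and because the weight $\exp(-\beta\omega_e)$ of such an edge is exponentially (in $\beta\epsilon/n$) larger than any competing weight across the corresponding cut, Kirchhoff's formula \eqref{eq:p_in_tree_r_eff} forces $\bP(e\in\cT) = 1 - o(1)$; symmetrically, edges with $\omega_e$ well above the window have exponentially small probability of being selected.

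The key steps, in order, are: (1) Recall from Section \ref{SS:MST} the coupling of the MST to the random graph process; by Lemma \ref{L:max_deg_MST}, with probability $1 - O(n^{-4})$ the MST has maximum degree at most $60\log n$, and by Theorem \ref{T:C2_size} the second-largest component of $G_{n,p}$ at $p = (1\pm\epsilon)/n$ has size $O(\log n\,\epsilon^{-2})$ with high probability. (2) For a ``sub-window'' MST edge $e=(u,v)$ (one with $\omega_e \le (1-\epsilon)/n$): using Kirchhoff, $\bP(e\in\cT) = \weight(e)\,\effR{}{u}{v}$, and one upper-bounds the complementary probability $1 - \bP(e\in\cT) = \bP(e\notin\cT)$ by comparing the weight of $e$ to the total conductance of all other $u$--$v$ paths. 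Since every other edge in the graph has weight at most $\exp(-\beta\omega_f) \le \exp(-\beta(1-\epsilon)/n) \cdot$(correction) while $\weight(e) \ge \exp(-\beta(1-\epsilon)/n)\cdot e^{\beta(\,\cdot\,)}$ — the point being that the gap in the exponent across the critical window is of order $\beta\epsilon/n \to \infty$ — a crude flow/cut bound gives $1 - \bP(e\in\cT) \le n^2 e^{-c\beta\epsilon/n} = o(1/n)$. (3) For an edge $f$ with $\omega_f \ge (1+\epsilon)/n$: a direct bound $\bP(f\in\cT) = \weight(f)\effR{}{f^-}{f^+} \le \weight(f)\cdot 1 = e^{-\beta\omega_f}$, summed over such edges, contributes $o(1)$ to $\mathcal{O}(\beta)$ after squaring. (4) For the $O(n\,\text{poly}\log n/(\epsilon^{-2}))$-many edges with $\omega_f$ inside the critical window, one uses the trivial bound $\bP(f\in\cT)\le 1$ and the fact that — because $\beta \gg n(\log n)^2$ lets us choose $\epsilon$ with $n/(\beta\epsilon^2)$ small, equivalently $n\log^2 n \cdot \epsilon^{-2}/n^2$ negligible — the number of such window edges (which is concentrated around $\epsilon n^2/2 \cdot$ wait, rather is of order... ) is $o(n)$; here one must be careful and instead argue that the window edges that actually lie in the MST number at most $O(\log n\,\epsilon^{-2}) = o(n)$ by the component-size bound from step (1), since MST edges added while processing weights in the window join vertices within the small components of $G_{n,(1-\epsilon)/n}$. (5) Combine: $\mathcal{O}(\beta) = \sum_e \bP(e\in\cT)^2 \ge (n-1 - o(n))(1-o(1))^2 = (1-o(1))n$ and $\le (n-1) + o(n) = (1+o(1))n$, giving the claim. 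The matching argument for Theorem \ref{T:length} then follows since $\bE[L(\cT)] = \sum_e \omega_e \bP(e\in\cT) = (1+o(1))\sum_{e\in M}\omega_e \to \zeta(3)$ by \cite{Fri85}.

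The main obstacle I expect is step (2)/(4): making the ``exponential separation across the critical window'' argument quantitatively tight enough. The naive cut bound $1-\bP(e\in\cT) \le \sum_{f \text{ across the cut}} \weight(f)/\weight(e)$ must be combined with the right choice of $\epsilon$ so that $\beta\epsilon/n$ beats the $\log n$ losses from union bounds over $n^2$ edges and from the $O(\log n\,\epsilon^{-2})$ component-size estimates simultaneously — this is exactly where the hypothesis $\beta \gg n(\log n)^2$ (rather than merely $\beta \gg n\log n$) is consumed, since one needs both $\beta\epsilon/n \gg \log n$ and $\epsilon^{-2}\log n = o(n)$, i.e. $\epsilon \gg \sqrt{\log n/n}$, which forces $\beta \gg n\log n/\epsilon \cdot \log n \sim n(\log n)^2$ up to the slowly-growing factor. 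A secondary subtlety is handling the window edges that genuinely appear in the MST: one needs the structural fact (from Kruskal's algorithm, as in Section \ref{SS:MST}) that these edges all lie within small $G_{n,(1-\epsilon)/n}$-components, so that their number is controlled by Theorem \ref{T:C2_size} rather than by the raw count of window edges, which would be far too large.
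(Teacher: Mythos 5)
Your overall skeleton is the same as the paper's (Kirchhoff plus a Nash--Williams cut bound at the Kruskal time of each MST edge, with the cut size controlled through the coupling to $G_{n,p}$ and Theorem \ref{T:C2_size}), but the central quantitative step is justified by a claim that is false, and this is a genuine gap. You assert that for an MST edge $e$ with $\omega_e\le(1-\epsilon)/n$ the competing edges $f$ across the relevant cut have weights smaller by a factor $e^{-c\beta\epsilon/n}$, i.e.\ that there is a \emph{deterministic} gap $\omega_f-\omega_e\gtrsim\epsilon/n$ ``across the critical window.'' There is no such gap. The only cut that separates $e^-$ from $e^+$ and for which $e$ is the minimal edge is the Kruskal cut $E(S(u),S(u)^c)$ taken at time $\omega_e^-$ (any cut made only of edges with $\omega_f>(1-\epsilon)/n$ fails to separate $u$ from $v$, since $e$ itself already joins them inside a component of $G_{n,(1-\epsilon)/n}$). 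The edges of that cut other than $e$ merely satisfy $\omega_f>\omega_e$; conditionally on the component structure the normalized gaps $(\omega_f-\omega_e)/(1-\omega_e)$ are i.i.d.\ uniform, so among the $\le Ln\epsilon^{-2}\log n$ cut edges the smallest gap is typically of order $(n\epsilon^{-2}\log n)^{-1}$, which has nothing to do with $\epsilon/n$. The correct step — the paper's event $\mathcal{B}_e$ in \eqref{eq:def_B_e} — is probabilistic: one shows that with probability $1-O\big(n\epsilon^{-2}(\log n)^2/\beta\big)$ \emph{all} gaps exceed $5\log n/\beta$, whence $\sum_{f}e^{-\beta(\omega_f-\omega_e)}\le n^{-3}$ in \eqref{eq:nash_component}. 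This is exactly where $\beta\gg n(\log n)^2$ is consumed; your accounting via ``$\beta\epsilon/n\gg\log n$'' presumes the nonexistent deterministic gap. The window $[(1-\epsilon)/n,(1+\epsilon)/n]$ serves only to keep $|S(u)|$, hence the cut, small — not to supply a weight gap.

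Two further points. First, your steps (3)--(5) work harder than necessary on the upper bound: since $\sum_e\bP(e\in\cT)=n-1$ and $\bP(e\in\cT)^2\le\bP(e\in\cT)$, one has $\mathcal{O}(\beta)\le n-1$ deterministically, so only a first-moment lower bound $\E[\mathcal{O}(\beta)]\ge(1-o(1))n$ is needed, followed by Markov/Chebyshev; this also spares you from proving that \emph{all} MST edges simultaneously satisfy $\bP(e\in\cT)=1-o(1)$, which a union bound under $\beta\gg n(\log n)^2$ would not deliver. Second, the bound $\effR{}{f^-}{f^+}\le 1$ in your step (3) is false in this weighted graph (all conductances are $\le1$, and e.g.\ if every weight were $e^{-\beta}$ the effective resistance would be of order $e^{\beta}/n$); the only trivial bound is $\effR{}{f^-}{f^+}\le1/\weight(f)$, which gives nothing. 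This error happens to be harmless because the step is unnecessary, but as written it cannot be used to control the non-MST edges.
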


\noindent 
The proof idea of Lemma~\ref{L:high_overlap} is the following. Suppose that the event $\{e \in$ MST$\}$ occurs for some edge $e \in E$. We first construct several different events (see \eqref{eq:defG_e}, \eqref{eq:defM_E}, \eqref{eq:defC_e} and \eqref{eq:def_B_e}), and show that each of them occurs with a high enough $\P$-probability. Then in \eqref{eq:e_in_cT_MST} we show that, on these events, the $\bP$-probability of $e$ also being in the RSTRE is close to $1$. This will imply that many edges in the MST will also be in the RSTRE with high probability, and hence the overlap must be large. Before we proceed with proof, we recall one more tool to bound effective resistances. The following inequality may be found in \cite[Section~2.5]{LP16}.

\begin{lemma}[Nash-Williams inequality] \label{L:nash_williams}
    Let $\Pi_1, \ldots \Pi_k \subset E$ be disjoint cutsets of distinct vertices $u$ and $v$, i.e.\ any path between $u$ and $v$ uses at least one edge in $\Pi_i$ for every $i \in \{1, \ldots, k\}$. Then
    \begin{equation}
        \effR{(G,\weight)}{u}{v} \geq \sum_{i=1}^k \Big( \sum_{e \in \Pi_i} \weight(e) \Big)^{-1}.
    \end{equation}
\end{lemma}

\begin{proof}[Proof of Lemma~\ref{L:high_overlap}]
    Fix an edge $e =\{u,v\}$. Define $p_e = p_e(\omega) = \omega_e$ and $p^-_e = p^-_e(\omega) = \omega_e - e^{-n}$, and consider the event $\mathcal{G}_e$ that the only edge added in the graph process $G_{n,p}$ (coupled to $\omega$ as in Section~\ref{SS:coupleRG}) between $p^-_e$ and $p_e$ is the edge $e$, i.e.\
	\begin{equation} \label{eq:defG_e}
		\mathcal{G}_e := \big\{ G_{n, p_e} = G_{n, p^-_e} \cup \{ e \} \big\}.
	\end{equation}
	It may be helpful to think of $p^-_e$ as the (random) time just before the edge $e$ is added to the random graph. Using the distribution of the minimum difference of a collection of uniform random variables (see e.g. \cite[equation (6.2)]{MSS23}), one can easily show that $\mathcal{G}_e$ occurs with $\P$-probability at least $1 - O(n^{-4})$.

	Let $\epsilon = \epsilon(n) \geq n^{-1/10}$ be a sequence to be determined later (which will satisfy $\epsilon = o_\beta(1)$), and consider the following two events
	\begin{align}
		\mathcal{M}_e &:=  \big\{ e \in \textrm{MST} \big\} \cap \big\{ \omega_e \leq \frac{5 \log n}{n} \big\}, \label{eq:defM_E}\\
		\mathcal{S}_e = \mathcal{S}_e(\epsilon) &:= \big\{ |\mathcal{C}_2(p^-_e)| \leq L \epsilon^{-2} \log n \big\}, \label{eq:defC_e}
	\end{align}
	for some large enough but fixed constant $L$ as in Theorem~\ref{T:C2_size}.
    By Kruskal's algorithm, the event that $G_{n,p}$ is connected occurs if and only if every edge $f$ in the MST satisfies $\omega_f \leq p$. Thus, using Theorem~\ref{T:Gnp_connect}, we obtain
	\begin{align*}
		\P( \mathcal{M}_e) &\geq \P\big( \{e \in \textrm{MST} \} \cap \{ \forall f \in \textrm{MST}, \omega_f \leq 5\log n/n \big) \\
        & \geq \P(e \in \textrm{MST}) - \P(G_{n,5\log n/n} \text{ is not connected}) = \frac{2}{n} - O(n^{-4}).
	\end{align*}
        Consider now a sequence $0 = q_1 < q_2 < \ldots < q_k < q_{k+1}, \ldots < q_\ell = 1$ such that $q_k = (1-\epsilon)/n$, $q_{k+1} = (1+\epsilon)/n$, and $q_{i+1} - q_i \leq e^{-n}$ for $i \neq k$. Suppose that $p_e \in (q_{i}, q_{i+1})$ for some $i \neq k$, then on the event $\mathcal{G}_e$ we must have that $G_{n,p^-_e} = G_{n,q_i}$. Therefore,
	\begin{align*}
		\P( \mathcal{S}_e) &\geq \sum_{i=1, i \neq k}^{\ell-1}  \P \big( \mathcal{S}_e \cap \mathcal{G}_e \mid p_e \in (q_i, q_{i+1}) \big) (q_{i+1} - q_i)  \\
        &\geq \sum_{i=1, i \neq k}^{\ell-1} \Big( \P \big( |\cC_2(q_i)| \leq L \epsilon^{-2} \log n \mid p_e \in (q_i, q_{i+1}) \big) - \P\big(\mathcal{G}_e^c \mid p_e \in (q_i, q_{i+1}) \big) \Big)(q_{i+1} - q_i). 
	\end{align*}%
    Note that the graph $G_{n,q_i}$ conditioned on $p_e \in (q_i, q_{i+1})$ is distributed as $G_{n,q_i}$ conditioned on $p_e > q_i$, since in both cases the distribution coincides with that of $G_{n,q_i} \setminus \{e\}$.  For small $q_i$, say $q_i \leq n^{-9/10}$, we then have
    \begin{align*}
         \P \Big( |\cC_2(q_i)| \leq L \epsilon^{-2} \log n \mid p_e \in (q_i, q_{i+1}) \Big) &= \frac{\P \big( \{ |\cC_2(q_i)| \leq L \epsilon^{-2} \log n \} \cap \{ p_e > q_i\} \big)}{\P( p_e > q_i)} \\
         &\geq \P \big( |\cC_2(q_i)| \leq L \epsilon^{-2} \log n  \big)  - q_i.
    \end{align*}
    For large $q_i \geq n^{-9/10}$, it is easy to verify that the degree of $u$ in $G_{n,q_i}$ is at least $2$ with probability $1 - O(n^{-4})$, and by Theorem~\ref{T:Gnp_connect} the graph $G_{n-1,q_i}$ is connected with  probability at least $1 - O(n^{-4})$. With high enough probability, we therefore have $|\cC_2| = 0$ even after $e$ is removed.
    Hence, using bounds on the size of the 2nd largest component of Theorem~\ref{T:C2_size} gives that
    \begin{equation}
        \P( \mathcal{S}_e) \geq  \sum_{i=1, i \neq k}^{\ell-1} \big( 1 - O(n^{-\eta'}) - O(n^{-4}) \big) (q_{i+1} - q_i) = 1 - \frac{2 \epsilon}{n} - O(n^{-\eta'}), \label{eq:S_e_bound}
    \end{equation}
    where $\eta' = \min\{ \eta, 9/10\}$ and $\eta$ is the constant from Theorem~\ref{T:C2_size}.
	
    Let $S(u)$ and $S(v)$ be the connected components of the random graph process at $p^-_e$, and w.l.o.g. assume that $|S(u)| \leq |S(v)|$.
    We write $E(S(u), S(u)^c)$ for the set of edges between $S(u)$ and $S(u)^c$. Observe that since $S(u)$ is not the largest component, we have
    \begin{equation*}
        |E(S(u), S(u)^c)| \leq n |S(u)| \leq n |\mathcal{C}_2(p^-_e)|.
    \end{equation*}
    Applying Kirchhoff's formula together with the Nash-Williams inequality using the (single) cutset $E(S(u), S(u)^c)$ gives that
	\begin{equation} \label{eq:nash_component}
		\bP(e \in \cT) \geq \frac{\weight(e)}{\sum_{f \in E(S(u), S(u)^c)} \weight(f)} = \frac{1}{1 + \sum_{f \in E(S(u), S(u)^c) \setminus \{e\}} e^{-\beta (\omega_f - \omega_e)} }.
	\end{equation}
	Notice that on $\mathcal{M}_e \cap \mathcal{G}_e$ the edge $e$ has the lowest weight among all edges in $E(S(u), S(u)^c)$. Furthermore, conditional on $\omega_e$ and $G_{n,p^-_e}$, the random variables in the collection
	\begin{equation*}
		\Big( \frac{\omega_f - \omega_e}{1- \omega_e} \Big)_{f \in E(S(u), S(u)^c) \setminus \{e \}}
	\end{equation*}
	are i.i.d.\ with law uniform on $[0,1]$, and, on the event $\mathcal{S}_e$, the cutset is bounded in size by
	\begin{equation*}
		\big| E(S(u), S(u)^c) \big| \leq n |\mathcal{C}_2(p^-_e)| \leq L n \epsilon^{-2} \log n.
	\end{equation*}
	Further, define 
	\begin{align}
		\mathcal{B}_e  &:= \Big\{\forall f \in E \big( S(u), S(u)^c \big) \setminus \{e \} \, : \, e^{-\beta(\omega_f - \omega_e)} < \frac{1}{n^5} \Big\} \nonumber \\
		&= \Big\{ \forall f \in E \big( S(u), S(u)^c \big) \setminus \{e \} \, : \, \frac{\omega_f-\omega_e}{1-\omega_e} >  5 \frac{\log n}{ \beta( 1-\omega_e)} \Big\} \label{eq:def_B_e}
	\end{align}
    Since on the event $\mathcal{M}_e \cap \mathcal{S}_e \cap \mathcal{G}_e$ we have $\omega_e \leq 5 \log n/n$,
    it follows that for $U$ distributed uniformly on $[0,1]$,  we have
	\begin{equation*}
        \P \big( \mathcal{B}_e \mid \mathcal{M}_e \cap \mathcal{S}_e \cap \mathcal{G}_e \big) 
        \geq \P \Big( U > 6 \frac{\log n}{ \beta} \Big)^{L n \epsilon^{-2} \log n} = \Big(1 - 6 \frac{\log n}{ \beta} \Big)^{L n \epsilon^{-2} \log n}. 
	\end{equation*}
	Hence, if for some $\gamma > 0$
	\begin{equation} \label{eq:condition_beta_eps}
		\frac{n \epsilon^{-2} (\log n)^2 }{\beta} \leq \gamma,
	\end{equation}
	then the event $\mathcal{B}_e$ occurs with probability at least 
    \begin{equation*}
        \Big(1 - 6 \frac{\log n}{ \beta} \Big)^{L \gamma \frac{\beta}{\log n}} \geq \exp \Big( - \frac{6 L \gamma}{1 -6 \log n/\beta } \Big) \geq 1 - 12 L \gamma,
    \end{equation*}%
    whenever $6 \log n/\beta < 1/2$.
	
	Notice that inequality \eqref{eq:nash_component} gives that restricted to the event $\mathcal{M}_e \cap \mathcal{S}_e  \cap \mathcal{G}_e \cap \mathcal{B}_e$, we have
	\begin{equation} \label{eq:e_in_cT_MST}
		\bP(e \in \cT) \geq \frac{1}{1 + n^{-3}} \geq 1 - n^{-3}.
	\end{equation}
	Hence, combing the previous bounds shows that for $\gamma \geq n^{-\eta'}$ 
	\begin{equation*}
		\E \big[ \bP(e \in \cT)^2 \big] \geq (1 - n^{-3})^2 \P \big( \mathcal{B}_e \mid \mathcal{M}_e \cap \mathcal{S}_e \cap \mathcal{G}_e \big) \P\big( \mathcal{M}_e \cap \mathcal{S}_e \cap \mathcal{G}_e \big) \geq \big(1 - O(\gamma) \big) \big(1 - \epsilon \big) \frac{2}{n},
	\end{equation*}
	and in particular
	\begin{equation*}
		\E\big[ \mathcal{O}(\beta) \big] \geq \big(1 - O(\gamma) \big) \big(1 - \epsilon \big) n.
	\end{equation*}
	
	If $\beta \gg n (\log n)^2$, then we may pick $\gamma = o(1)$ and $\epsilon = o(1)$ such that the condition \eqref{eq:condition_beta_eps} is fulfilled. Furthermore, as trivially the second moment of $\mathcal{O}(\beta)$ is bounded from above by $n^2$, applying Chebyshev's inequality immediately gives \eqref{eq:high_overlap}.
\end{proof}

In the following, we briefly sketch how to obtain the equality \eqref{eq:length_high} in the high disorder regime of Theorem~\ref{T:length}. We also refer to \cite[Theorem~1.8]{K24}, where they only require that $\beta \gg n \log n$ instead of $\beta \gg n (\log n)^5$.

\begin{proof}[Proof sketch of Theorem~\ref{T:length}~\ref{T:length_high}]
	Denote by $M$ the MST coupled to the environment as in Section~\ref{SS:coupleRG}. As the MST has the minimum length given the environment $\omega$, we have that
	\begin{equation*}
		L(M) \leq L(\cT) \leq L(M) + |\cT \setminus M| \max_{e \in \cT} \omega_e,
	\end{equation*}
	where $|\cT \setminus M|$ is the number of edges in $\cT$ that are not in $M$. Let $\epsilon$ and $\gamma$ be as in \eqref{eq:condition_beta_eps}, then the above proof of Theorem~\ref{L:high_overlap} shows that the overlap between $M$ and $\cT$ satisfies
	\begin{equation*}
		\bE\big[ |M \cap \cT| \big] \geq \big(1 - O(\max \{ \epsilon, \gamma \}) \big) n,
	\end{equation*}
	with high probability. In particular,
	\begin{equation*}
		|\cT \setminus M| = O(\max \{ \epsilon, \gamma \}) n
	\end{equation*}
	with high probability. As $G_{n,p}$ is connected with high probability when $p = 2 \log n/n$, using Lemma~4.3 of \cite{MSS24}, one may easily obtain that $\max_{e \in \cT} \omega_e \leq 3 \log n/n$ with high probability. Equation \eqref{eq:length_high} now follows by letting $\epsilon, \gamma = o( (\log n)^{-1})$, which is possible provided that $\beta \gg n (\log n)^5$.
\end{proof}

\subsection{Local limit}

To show the local convergence towards a limit, we will proceed similarly as in the proof of Lemma~\ref{L:high_overlap}. However, instead of looking only at a fixed edge, we consider all outgoing edges of a vertex.
We first treat the case $r=1$ in the following lemma, and then extend this to the full local limit by using the union bound and transitivity. Recall that $B_\cT(u,r)$ and $B_M(u,r)$ are the closed balls of radius $r$ centred at $u$ in the RSTRE and MST, respectively.

\begin{lemma} \label{L:cT_M_deg}
	For any $u \in V$ we have that
	\begin{equation} \label{eq:cT_M_deg}
		\E \Big[ \bP \big( B_\cT(u,1 ) \neq B_M(u,1) \big) \Big]  = O\Bigg( \max\Bigg\{ \Big( \frac{n (\log n)^5}{\beta} \Big)^{1/3}, n^{-\eta'/2} \Bigg\}\Bigg),
	\end{equation}
	where $\eta' = \min\{ \eta, 1/10\}$ and $\eta$ is the constant from Theorem~\ref{T:C2_size}.
\end{lemma}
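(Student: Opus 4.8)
The plan is to adapt the single-edge argument from the proof of Lemma~\ref{L:high_overlap} simultaneously to all edges incident to the fixed vertex $u$. Note that $B_\cT(u,1) \neq B_M(u,1)$ can only happen if either some edge $e$ incident to $u$ belongs to $M$ but not to $\cT$, or some edge incident to $u$ belongs to $\cT$ but not to $M$. By Lemma~\ref{L:max_deg_MST} we may restrict to the high-probability event that $\deg_M(u) \leq 60\log n$, so there are at most $O(\log n)$ MST-edges at $u$ to worry about for the first type of discrepancy. For each such edge $e=(u,v)$ I would run exactly the construction of the events $\mathcal{G}_e$, $\mathcal{M}_e$, $\mathcal{S}_e$, $\mathcal{B}_e$ from the proof of Lemma~\ref{L:high_overlap}, which gives $\bP(e \in \cT) \geq 1 - n^{-3}$ on the intersection of these events, and hence $\bP(e \notin \cT) \leq n^{-3}$ plus the failure probabilities of the defining events. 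Summing over the $O(\log n)$ MST-edges at $u$, the contribution of the first type of discrepancy is $O\big(\log n \cdot \max\{ (n(\log n)^2/\beta)\cdot(\text{polylog}), n^{-\eta}\}\big)$; absorbing the logarithmic factors by slightly weakening the polylog exponent (from $(\log n)^2$ to $(\log n)^5$, say, to match the statement) and taking a square root as in the $n^{-\eta/2}$ term handles this part.

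For the second type of discrepancy — an edge incident to $u$ that lies in $\cT$ but not in $M$ — I would argue that with high probability no such edge exists, or rather bound its probability directly. The cleanest route: condition on the environment $\omega$ and on the event that $G_{n,p}$ with $p = 5\log n/n$ is connected, so that $M \subseteq G_{n,p}$ and in particular every MST-edge has weight $\omega_e \leq 5\log n/n$. By Kirchhoff's formula and Lemma~\ref{L:max_deg_MST}-type control, together with the Nash--Williams / cutset bounds used in Lemma~\ref{L:high_overlap}, the probability that $\cT$ uses an edge $f$ at $u$ with $\omega_f$ much larger than $5\log n/n$ is dominated by $\weight(f)\effR{}{f^-}{f^+}$, and since $\effR{}{u}{\cdot}$ is at most $O(1)$ (trivially $\le 2$, or via a spanning subtree of small resistance inside the connected $G_{n,p}$), this probability is at most $\weight(f) = e^{-\beta\omega_f}$, which is summably small once $\beta\omega_f \gg \log n$. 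Summing over the (at most $n$) candidate edges $f$ at $u$ and over the environment gives a bound of the same order as before. A complementary and perhaps simpler bookkeeping device is to observe $\bP\big(B_\cT(u,1) \neq B_M(u,1)\big) \leq \sum_{e \ni u, e \in M} \bP(e \notin \cT) + \bP\big(\deg_\cT(u) > \deg_M(u)\big)$ and to bound the last term using $\E[\deg_\cT(u)] = \sum_{f \ni u}\weight(f)\effR{}{f^-}{f^+}$, which, once we know the overlap is $(1-o(1))n$ from Lemma~\ref{L:high_overlap} and $\deg_M$ sums to $2(n-1)$, forces $\deg_\cT(u) = \deg_M(u)$ for all but an $o(1)$-fraction of vertices; transitivity then upgrades this to the averaged statement for the fixed vertex $u$.

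Carrying this out, the main steps in order are: (1) reduce the event $\{B_\cT(u,1) \neq B_M(u,1)\}$ to a union over the MST-edges at $u$ of $\{e \notin \cT\}$ together with a ``no extra edges'' event, using the bounded-degree bound of Lemma~\ref{L:max_deg_MST}; (2) for each MST-edge $e$ at $u$, rerun the event construction $\mathcal{G}_e, \mathcal{M}_e, \mathcal{S}_e, \mathcal{B}_e$ of Lemma~\ref{L:high_overlap} and collect the estimate $\E[\bP(e\notin\cT)] = O(\max\{n\epsilon^{-2}(\log n)^2/\beta,\, n^{-\eta}\}) + O(n^{-3})$; (3) optimize the choice of $\epsilon$ and union-bound over the $O(\log n)$ edges, losing at most polylog factors which are absorbed by relaxing $(\log n)^2$ to $(\log n)^5$ and passing to $n^{-\eta/2}$; (4) dispose of the ``extra edges at $u$'' term via the effective-resistance / Kirchhoff estimate on $\E[\deg_\cT(u)]$ combined with the high-probability connectivity of $G_{n,5\log n/n}$. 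I expect step~(4) — controlling the possibility that $\cT$ acquires an edge at $u$ that $M$ does not have — to be the main obstacle, since the construction in Lemma~\ref{L:high_overlap} is tailored to showing MST-edges survive in $\cT$ and does not immediately say anything about spurious edges; the resolution is that any such edge has weight $e^{-\beta\omega_f}$ with $\omega_f$ at least of order $\log n/n$ (as it is not in the near-critical window and $M\cup G_{n,p}$ are already connected), making its inclusion probability exponentially small in $\beta\log n/n \gg \log n$, which after a union bound over $\leq n$ such edges and over $\omega$ contributes a term of the claimed order.
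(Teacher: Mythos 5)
Your treatment of the first half of the discrepancy event (an MST edge at $u$ missing from $\cT$) is essentially the paper's argument: rerun the events $\mathcal{G}_e,\mathcal{M}_e,\mathcal{S}_e,\mathcal{B}_e$ for each edge of $M$ at $u$, union bound, and accept a $\log n$ loss that is absorbed into the exponents $(\log n)^5$ and $n^{-\eta/2}$. That part is fine.

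The problem is your step (4), the ``spurious edges'' at $u$. Your primary route rests on the claim that $\effR{}{u}{x}=O(1)$ (``trivially $\le 2$, or via a spanning subtree inside the connected $G_{n,p}$''), so that $\bP(f\in\cT)\le\weight(f)=e^{-\beta\omega_f}$ is exponentially small. This is false in the present weighted graph: resistances are $e^{+\beta\omega_e}$, and every path from $u$ to $x$ uses edges with $\omega_e$ of order at least $1/n$, so $\effR{}{u}{x}\gtrsim e^{c\beta/n}\gg e^{c(\log n)^2}$; there is no $O(1)$ bound. What Kirchhoff actually gives for a non-MST edge $f=(u,x)$ is $\bP(f\in\cT)\le\sum_{e}e^{-\beta(\omega_f-\omega_e)}$ over the MST path from $u$ to $x$, and for the dangerous edges --- those whose weight exceeds the critical merging weight of their cycle by only $O(1/\beta)$ --- this bound is $\Theta(1)$ and proves nothing. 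So the per-edge Kirchhoff estimate cannot dispose of the extra-edge term. Your ``complementary bookkeeping device'' is, however, exactly the right (and the paper's) argument: since $\widehat{\E}[\deg_\cT(u)]=\widehat{\E}[\deg_M(u)]=2(n-1)/n$ by exchangeability, and the first half shows $\widehat{\E}\big[\#\{e\ni u:e\in M,\ e\in\cT\}\big]\ge 2(n-1)/n-O(\gamma)$, one gets one-sided control for free; the paper makes this quantitative by splitting $\widehat{\E}[\deg_\cT(u)-\deg_M(u)]=0$ according to the sign of the difference and invoking Lemma \ref{L:max_deg_MST} to bound the negative part by $60\log n\cdot O(\gamma)$, yielding $\widehat{\P}(\deg_\cT(u)>\deg_M(u))=O(\gamma\log n)$. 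If you promote route (b) from a parenthetical to the actual argument (and state it via the expected-degree identity rather than ``once we know the overlap is $(1-o(1))n$'', which is a statement about the sum over all edges and does not by itself localize to the fixed vertex $u$), the proof closes; as written, the step you yourself flag as the main obstacle is resolved by an incorrect resistance bound.
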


\begin{proof}
	We first show that the expectation of $\bP( B_M(u,1) \not\subseteq B_\cT(u,1 ))$ is small by restricting to the events as in the proof of Lemma~\ref{L:high_overlap}. Let
	\begin{equation*}
		\gamma = \epsilon = \max\Big\{ \Big( \frac{n (\log n)^2}{\beta} \Big)^{1/3}, n^{-2\eta'/3}\Big\}.
	\end{equation*}
	Our choice of $\epsilon$ and $\gamma$ guarantees that the condition in \eqref{eq:condition_beta_eps} holds (where we may implicitly assume that $\epsilon = \gamma = o(1)$) with $\epsilon \geq n^{-1/10}$ as required for Theorem~\ref{T:C2_size}. For $e = \{u,v\}$ recall the definitions of $\mathcal{G}_{\{u,v\}}$, $\mathcal{S}_{\{u,v\}}(\epsilon)$ and $\mathcal{B}_{\{u,v\}}$ from \eqref{eq:defG_e}, \eqref{eq:defC_e} and \eqref{eq:def_B_e}, 
	and further define
	\begin{align*}
		\mathcal{C} &:= \big\{ G_{n,5 \log n/n} \textrm{ is connected} \big\}, \\
		\mathcal{I} &:= \Big[0, \frac{1 - \epsilon}{n} \Big] \cup \Big[\frac{1+\epsilon}{n}, 5 \frac{\log n}{n}\Big], \\
		\mathcal{E} &:= \big\{ \text{for all } v \text{ such that } \{u,v\} \in M: \omega_{\{u,v\}} \in \mathcal{I} \big\}.
	\end{align*}
	
	By a union bound, we have that
	\begin{equation*}
		\bP\big( B_M(u,1) \not\subseteq B_\cT(u,1) \big) \leq \sum_{v \neq u} \bP\big( \{u,v\} \not\in \cT \big) 1_{\{u,v\} \in M}.
	\end{equation*}
	The arguments leading to \eqref{eq:e_in_cT_MST} show that, on the event
    \begin{equation*}
        \{ \{u,v\} \in M\} \cap \mathcal{C} \cap \mathcal{S}_{\{u,v\}} \cap \mathcal{G}_{\{u,v\}} \cap \mathcal{B}_{\{u,v\}}, 
    \end{equation*}
    we have $\bP( \{u,v\} \not\in \cT) \leq n^{-3}$, so that
	\begin{equation} \label{eq:uv_not_T_indicators}
		\sum_{v \neq u} \bP\big( \{u,v\} \not\in \cT \big) 1_{\{u,v\} \in M} 1_{\mathcal{G}_{\{u,v\}}} 1_{\mathcal{S}_{\{u,v\}}} 1_{\mathcal{B}_{\{u,v\}}}  1_{\mathcal{C}} \leq n^{-2}.
	\end{equation}
	We now split the expectation of $\bP( B_M(u,1) \not\subseteq B_\cT(u,1))$ into different terms depending on whether the events of the indicators in \eqref{eq:uv_not_T_indicators} hold or not. First, restrict to the event $\mathcal{G}_{\{u,v\}} \cap \mathcal{C} \cap \mathcal{E}$ such that with the bound around \eqref{eq:defG_e} and Theorem~\ref{T:Gnp_connect}, we have
	\begin{align*}
		\E \Big[ \bP\big( B_M(u,1) \not\subseteq B_\cT(u,1) \big)  \Big] &\leq \E \Big[ \bP\big( B_M(u,1) \not\subseteq B_\cT(u,1) \big) 1_{\mathcal{C}} 1_{\mathcal{E}} \Big] + \P( \mathcal{E}^c \cap \mathcal{C}) + O(n^{-4}) \\
		&\leq \E \Big[ \sum_{v \neq u} \bP\big( \{u,v\} \not\in \cT \big) 1_{\{u,v\} \in M} 1_{\mathcal{G}_{\{u,v\}}} 1_{\mathcal{C}} 1_{\mathcal{E}}  \Big] + O(\epsilon),
	\end{align*}
	where, using the coupling of the MST to the random graph from Section~\ref{SS:MST} (recall that the connected components of $M \cap G_{n,p}$ agree with those of $G_{n,p}$), we may bound
	\begin{equation*}
		\P( \mathcal{E}^c \cap \mathcal{C}) \leq \P \Big( \exists v \neq u : \omega_{\{u,v\}} \in \Big[ \frac{1 - \epsilon}{n}, \frac{1+\epsilon}{n} \Big] \Big)  + O(n^{-4})= O(\epsilon).
	\end{equation*}
	Restricting further to $\mathcal{S}_{\{u,v\}}$, we obtain
	\begin{align*}
		\E \Big[ \sum_{v \neq u} \bP\big( \{u,v\} \not\in \cT \big) 1_{\{u,v\} \in M} 1_{\mathcal{G}_{\{u,v\}}} 1_{\mathcal{C}} 1_{\mathcal{E}}  \Big] &\leq 
		\E \Big[ \sum_{v \neq u} \bP\big( \{u,v\} \not\in \cT \big) 1_{\{u,v\} \in M} 1_{\mathcal{G}_{\{u,v\}}} 1_{\mathcal{S}_{\{u,v\}}} 1_{\mathcal{C}}  \Big] \\
		&\qquad + \sum_{v \neq u} \P \Big(  \{ \{u,v\} \in M \} \cap \mathcal{S}_{\{u,v\}}^c \cap \mathcal{E} \Big).
	\end{align*}
	The bound from \eqref{eq:S_e_bound} (see also Theorem~\ref{T:C2_size}) gives us that
	\begin{equation*}
		\sum_{v \neq u} \P \Big( \{ \{u,v\} \in M \} \cap \mathcal{S}_{\{u,v\}}^c \cap \mathcal{E} \Big) \leq  n \P \big(\mathcal{S}_{\{u,v\}}^c \mid  \omega_{\{u,v\}} \in I \big) \P \big( \omega_{\{u,v\}} \in I \big) = O( n^{-\eta'} \log n).
	\end{equation*}
	Finally, restricting to $\mathcal{B}_{\{u,v\}}$ gives us with the condition \eqref{eq:condition_beta_eps} that
	\begin{align}
		\E \Big[ \bP\big( B_M(u,1) \not\subseteq B_\cT(u,1) \big)  \Big] &\leq \E \Big[ \sum_{v \neq u} \bP\big( \{u,v\} \not\in \cT \big) 1_{\{u,v\} \in M} 1_{\mathcal{G}_{\{u,v\}}} 1_{\mathcal{S}_{\{u,v\}}} 1_{\mathcal{B}_{\{u,v\}}}1_{\mathcal{C}}  \Big] \nonumber \\
		&+ \sum_{v \neq u} \P \Big( \mathcal{B}_e^c \mid \{ \{u,v\} \in M \} \cap \mathcal{S}_{\{u,v\}} \cap \mathcal{G}_{\{u,v\}} \cap \mathcal{C} \Big) \P \big( \{u,v\} \in M \big) + O(\epsilon) \nonumber \\
		&\leq n^{-2} +  n O(\gamma) \cdot \frac{2}{n}  + O(\epsilon) = O(\gamma). \label{eq:M_sub_T}
	\end{align}
	
	Consider now the degrees $\deg_M(u) = |B_M(u,1)|$ and $\deg_\cT(u) = |B_\cT(u,1)|$ of $u$ in $M$ and $\cT$, respectively. Both degrees have expectation $2 (n-1)/n$, and by the calculations above, the degree $\deg_\cT(u)$ almost stochastically dominates $\deg_M(u)$. We will use this fact now to show that the ball in $\cT$ is with high probability not larger than the ball in $M$. 
	
	By \eqref{eq:M_sub_T} we have
	\begin{align}
		\E \Big[ \bP \big( B_\cT(u,1) \not\subseteq B_M(u,1) \big) \Big] &\leq \E \Big[ \bP\big( \{ B_\cT(u,1) \not\subseteq B_M(u,1) \} \cap \{ B_M(u,1) \subseteq B_\cT(u,1)\}  \big) \Big] \nonumber\\
		&\qquad + \E \Big[ \bP \big( B_M(u,1) \not\subseteq B_\cT(u,1) \big) \Big] \nonumber \\
		&\leq \E \Big[ \bP \big( \deg_\cT(u) > \deg_M(u) \big) \Big] + O(\gamma). \label{eq:B_T_notsubset}  
	\end{align}
	Notice that \eqref{eq:M_sub_T} immediately also shows that
	\begin{equation*}
		\E \Big[ \bP \big(\deg_\cT(u) < \deg_M(u) \big) \Big] = O( \gamma ).
	\end{equation*}
	Therefore, considering the cases where $\deg_\cT(u)$ is larger than $\deg_M(u)$ or not, gives
	\begin{align}
		0 &= \E \Big[ \bE \big[ \deg_\cT(u) - \deg_M(u)\big] \Big] \nonumber \\
		&= \E \Big[ \bE\big[ \big(\deg_\cT(u) - \deg_M(u)\big) 1_{ \deg_\cT(u) > \deg_M(u)} \big] \Big] \nonumber \\
		&\qquad \qquad + \E \Big[ \bE \big[ \big(\deg_\cT(u) - \deg_M(u) \big) 1_{ \deg_\cT(u) < \deg_M(u)}\big] \Big]  \nonumber \\
		&\geq \E \big[ \bP \big( \deg_\cT(u) > \deg_M(u) \big) \big] - 60 \log n \E \big[ \bP \big( \deg_\cT(u) < \deg_M(u) \big) \big] \nonumber \\
		&\qquad \qquad - n \P \big( \deg_M(u) > 60 \log n \big).  \label{eq:deg_split_case}
	\end{align}
	Rearranging \eqref{eq:deg_split_case} and using Theorem~\ref{T:Gnp_connect} shows that 
	\begin{equation}
		\E \big[ \bP \big( \deg_\cT(u) > \deg_M(u) \big) \big] = O( \gamma \log n ). \label{eq:degT>degM}
	\end{equation}
	Putting \eqref{eq:M_sub_T}, \eqref{eq:B_T_notsubset} and \eqref{eq:degT>degM} together gives that
    \begin{align*}
        \E \Big[ \bP \big( B_\cT(u,1) \neq B_M(u,1) \big) \Big] &\leq \E \Big[ \bP \big( B_\cT(u,1) \not\subseteq B_M(u,1) \big) \Big] + \E \Big[ \bP \big( B_M(u,1) \not\subseteq B_\cT(u,1) \big) \Big] \\
        &\leq \E \Big[ \bP \big( \deg_\cT(u) > \deg_M(u) \big) \Big] + O(\gamma) = O(\gamma \log n),
    \end{align*}
    which finishes the proof by our choice of $\gamma$.
\end{proof}

As a corollary of Lemma~\ref{L:cT_M_deg}, we may now obtain the local limit of the RSTRE for $\beta = n (\log n)^{\lambda(n)}$ for diverging $\lambda(n)$.

\begin{proof}[Proof of Theorem~\ref{T:local_limit}~\ref{T:local_limit_high}]
	Fix $r \geq 1$. If $B_M(1,r) \neq B_\cT(1,r)$, then there must exist at least one vertex $u$ in $B_M(1,r)$ such that $B_M(u,1) \neq B_\cT(u,1)$. A union bound gives that
	\begin{align}
		\E \Big[ \bP \big( B_\cT(1,r  ) \neq B_M(1,r) \big) \Big] &\leq
		\E \Big[ \bP \big( B_\cT(1,1) \neq B_M(1,1) \big) \Big] \nonumber  \\
		&\qquad + \sum_{u \neq 1} \E \Big[ \bP \big( B_\cT(u,1) \neq B_M(u,1) \big) 1_{u \in B_M(1,r)}\Big]. \label{eq:B_neq_M_union}
	\end{align}
	Using exchangeability of the edge weights (due to transitivity of the complete graph) and the fact that $u \in B_M(1,r)$ if and only if $1 \in B_M(u,r)$, gives for any $v \neq u$ that
	\begin{align*}
		\E \Big[ \bP \big( B_\cT(u,1) \neq B_M(u,1) \big) 1_{u \in B_M(1,r)}\Big] &= \E \Big[ \bP \big( B_\cT(u,1) \neq B_M(u,1) \big) 1_{ 1 \in B_M(u,r)}\Big] \\
		&= \E \Big[ \bP \big( B_\cT(u,1) \neq B_M(u,1) \big) 1_{ v \in B_M(u,r)}\Big].
	\end{align*}
	It follows that
	\begin{align}
		&\E \Big[ \bP \big( B_\cT(u,1) \neq B_M(u,1) \big) 1_{u \in B_M(1,r)}\Big] \nonumber \\
		&\qquad \qquad = \frac{1}{n-1} \sum_{v \neq u} \E \Big[ \bP \big( B_\cT(u,1) \neq B_M(u,1) \big) 1_{v \in B_M(u,r)}\Big] \nonumber \\
		&\qquad \qquad = \frac{1}{n-1} \E \Big[ \bP \big( B_\cT(u,1) \neq B_M(u,1) \big) \big(\big|B_M(u,r)\big|-1 \big)\Big] \label{eq:B_neq_M_size_B}
	\end{align}
	In view of Lemma~\ref{L:max_deg_MST}, the size of $|B_M(u,r)|$ is bounded with high probability by $60^r (\log n)^r$, so that \eqref{eq:B_neq_M_size_B} can be bounded by
	\begin{equation} \label{eq:B_M_size_bound}
		\frac{60^r (\log n)^r}{n-1}   \E \Big[ \bP \big( B_\cT(u,1) \neq B_M(u,1) \big) \Big]  + O(n^{-4}).
	\end{equation}
	Plugging this bound into \eqref{eq:B_neq_M_union}, and using that $\beta \gg n (\log n)^{5 + 3r}$ shows with Lemma~\ref{L:cT_M_deg} that
	\begin{equation*}
		\E \Big[ \bP \big( B_\cT(1,r  ) \neq B_M(1,r) \big) \Big] = o_\beta(1).
	\end{equation*}
	
	Consequently, for any finite rooted tree $t$ of height $r$
	\begin{align*}
		\widehat{\P}( B_\cT(1,r) \simeq t) 
		&\leq \P(B_M(1,r) \simeq t) + \E \Big[ \bP \big( B_\cT(1,r) \neq B_M(1,r) \big) \Big] \\
		&= \P( B_M(1,r) \simeq t) + o_\beta(1)
	\end{align*}
	and
	\begin{align*}
		\widehat{\P}( B_\cT(1,r) \simeq t) 
		&\geq \P( B_M(1,r) \simeq t) - \E \Big[ \bP \big( B_\cT(1,r) \neq B_M(1,r) \big) \Big] \\
		&= \P( B_M(1,r) \simeq t) - o_\beta(1).
	\end{align*}
	The local convergence of the MST on the complete graph in Theorem~\ref{T:local_MST} then also implies the local convergence of the RSTRE (to the same limiting object).
\end{proof}

\begin{remark} \label{R:MST_growth}
	The argument after \eqref{eq:B_M_size_bound} also shows that the neighborhoods $B_\cT(u,1)$ and $B_M(u,1)$ agree with each other for diverging $r$, as long as $\beta$ is large enough such that
	\begin{equation*}
		60^r (\log n)^r  \Big( \frac{n (\log n)^5}{\beta} \Big)^{1/3} \longrightarrow 0.
	\end{equation*}
	We expect that this condition can be somewhat relaxed. Indeed, for \eqref{eq:B_M_size_bound}, we used the fact that, with high probability, the neighborhood $B_M(1,r)$ grows in size at most as fast as $60^r (\log n)^r$. However, as shown in \cite{Add13}, the typical size of a neighborhood is of order $r^3$. We are not aware of any better tail bounds, such as $|B_M(1,r)| = O(r^k \log n)$ for some $k \geq 1$, that hold with sufficiently high probability to make \eqref{eq:B_M_size_bound} work for smaller $\beta$.
\end{remark}

\section*{Acknowledgements}
The author is grateful to Rongfeng Sun for helpful comments on a draft of this paper. Furthermore, the author would like to thank Michele Salvi for raising interesting questions about the RSTRE, and an anonymous referee for their many constructive remarks.

\vspace*{-0.1cm}

\bibliographystyle{plain}
\bibliography{RSTRE}

\end{document}